\documentclass[12pt,reqno]{amsart}
\usepackage{amssymb, amsmath, amsfonts, amsthm, graphics}
\usepackage[hmargin=1 in, vmargin = 1 in]{geometry}
\usepackage{newpxtext}
\usepackage{newpxmath}
\usepackage{tikz-cd} 
\usepackage{enumitem}
\setlist{itemsep=0.5em}
\usetikzlibrary{matrix, calc, arrows} 
\usepackage[hyphens]{url}
\usepackage{hyperref}
\usepackage{cleveref}
\usepackage[all]{xy}
\usepackage{marginnote}

\newcommand{\isom}{\cong} 

\theoremstyle{definition}

\numberwithin{equation}{section}

\DeclareMathOperator{\initial}{in}

\newcommand{\FF}{\mathbb{F}}

\newcommand{\fm}{\mathfrak{m}}

\newcommand{\s}{\mathscr{s}}

\theoremstyle{plain}
\newtheorem{thm}{Theorem}[section]
\newtheorem{Pn}[thm]{Proposition}
\newtheorem{Cor}[thm]{Corollary}
\newtheorem{lem}[thm]{Lemma}

\theoremstyle{definition}

\newtheorem{maintheorem}{Theorem}

\newcommand{\Fp}{\mathbb{F}_p}
\newcommand{\m}{\mathfrak{m}}
\newcommand{\fsig}{s}

	\title{The $F$-signature of Determinantal Rings}
 \author{Hang Huang}
	\address[H.~Huang]{Department of Mathematics, Texas A \& M University College Station, TX, 77843}
\email{\href{mailto:hhuang235@tamu.edu}{hhuang235@tamu.edu}}
\author{Cheng Meng}
\address[C.~Meng]{Yau Mathematical Sciences Center, Tsinghua University, Beijing 100084, China.}
\email{\href{mailto:cheng319000@mail.tsinghua.edu.cn}{cheng319000@mail.tsinghua.edu.cn}}
	\author{Suchitra Pande}
	\address[S.~Pande]{Department of Mathematics\\University of Utah\\Salt Lake City, 
		UT, 84112\\USA}
\email{\href{mailto:suchitra.pande@utah.edu}{suchitra.pande@utah.edu}}
\author{Yevgeniya Tarasova}
	\address[Y.~Tarasova]{Department of Mathematics\\University of Michigan\\ Ann Arbor, MI 48109\\USA}
\email{\href{ytarasov@umich.edu}{ytarasov@umich.edu}}

\begin{document}

\maketitle
\begin{abstract}
Let $R_{n,p}$ be the determinantal hypersurface ring defined by the determinant of a generic $n \times n$ matrix in characteristic $p$ where $n \geq 2$. In this paper, we obtain explicit bounds for the $F$-signature of $R_{n,p}$. For the upper bound, we prove that this $F$-signature is decreasing in $n$, so the $F$-signature of $R_{2,p}$ serves as an upper bound, which is $2/3$. For the lower bound, we find a toric degeneration $B_n$ of $R_{n,p}$, so the $F$-signature of $B_n$ serves as a lower bound. This $F$-signature can be computed using either Gr\"obner basis or Han-Monsky machinery, and its value is $\frac{(n!)^2}{(2n-1)!}$.   
\end{abstract}

 \section{Introduction}
The Frobenius endomorphism provides a means of measuring singularities in
positive characteristic. One of its numerical invariants is the
\emph{$F$-signature}, which records the asymptotic proportion of free direct
summands in iterated Frobenius pushforwards. More precisely, let $(R,\m,k)$
be a $d$-dimensional $F$-finite local domain of characteristic $p>0$ with
perfect residue field, and write
\[
 F_*^eR\cong R^{\oplus a_e(R)}\oplus M_e,
\]
where $M_e$ has no nonzero free direct summand. Then
\[
 \s(R)=\lim_{e\to\infty}\frac{a_e(R)}{p^{ed}}.
\]
Introduced by Huneke and Leuschke~\cite{HL02}, the $F$-signature exists by
Tucker's theorem~\cite{Tuc12}. It is a real number between $0$ and $1$, and equals $1$ precisely when
$R$ is regular; it is positive precisely when $R$ is strongly
$F$-regular~\cite{HL02,AL03}. Thus, the $F$-signature gives a quantitative measurement of strong $F$-regularity and moreover, captures the finiteness of fundamental groups \cite{carvajal-rojas_fundamental_2016}. Understanding these values is therefore a natural next step
beyond establishing strong $F$-regularity. Even for classical families of
singularities, however, explicit values and effective estimates are
difficult to obtain.

Generic determinantal rings form a natural family in which to investigate
this problem. Their homological and geometric properties have been studied
extensively; see~\cite{BV88}. Their strong $F$-regularity is a theorem of
Hochster and Huneke~\cite[\S7]{HH94}; a more recent proof for maximal minors
was given by Pandey and Tarasova~\cite{PT24}. These results guarantee
positivity of the $F$-signature but do not supply its value. For rings
defined by $2\times2$ minors, the identification with Segre products permits
explicit computations, as in Singh's work~\cite{Sin05}. More generally,
toric methods express $F$-signatures of normal affine semigroup rings in
terms of polyhedral data~\cite{Sin05,VK11}. For larger determinants, such
computations are substantially less accessible.

In this paper, we obtain explicit bounds for the $F$-signature of generic
determinantal hypersurfaces. For a prime $p$ and an integer $n\geq2$, set
\[
 R_{n,p}=\Fp[x_{ij}\mid 1\leq i,j\leq n]/(\det X_n),
 \qquad X_n=(x_{ij}).
\]
For a standard graded ring, we use $\fsig$ to denote the $F$-signature at its
homogeneous maximal ideal. Our main result is the following.

\begin{maintheorem}[Theorem 3.1] \label{thm:determinantal}
For every prime $p$ and every $n\geq2$,
\[
 \s(R_{n+1,p})\leq\s(R_{n,p})
 \qquad\text{and}\qquad
 \frac{(n!)^2}{(2n-1)!}\leq\s(R_{n,p})\leq\frac23.
\]
\end{maintheorem}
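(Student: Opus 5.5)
The statement has three parts. The monotonicity $\s(R_{n+1,p})\le \s(R_{n,p})$ and the lower bound $\frac{(n!)^2}{(2n-1)!}\le \s(R_{n,p})$ are proved separately. The upper bound $\s(R_{n,p})\le \frac23$ then follows from monotonicity together with the explicit value $\s(R_{2,p})=\frac23$.

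\emph{Monotonicity.} We realize $R_{n,p}$, up to adjoining variables and localizing, as a localization of $R_{n+1,p}$. In $R_{n+1,p}$ we invert $x_{n+1,n+1}$. Row and column operations then put $X_{n+1}$ in the block form $\mathrm{diag}(X_n',x_{n+1,n+1})$, where $x'_{ij}=x_{ij}-x_{i,n+1}x_{n+1,j}/x_{n+1,n+1}$. The $x'_{ij}$, the entries $x_{i,n+1}$ and $x_{n+1,j}$, and $x_{n+1,n+1}^{\pm1}$ are algebraically independent generators. Hence
\[
R_{n+1,p}[x_{n+1,n+1}^{-1}] \cong R_{n,p}[x_{1,n+1},\dots,x_{n,n+1},x_{n+1,1},\dots,x_{n+1,n},x_{n+1,n+1}^{\pm1}].
\]
We then use three standard facts. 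The $F$-signature is lower semicontinuous on $\operatorname{Spec}$ (Polstra, or Enescu--Yao). It is unchanged by adjoining polynomial variables and localizing at primes in the corresponding fiber. For a graded ring, the value at the homogeneous maximal ideal is the minimum over all primes. Together these give
\[
\s(R_{n+1,p}) \le \s\big((R_{n+1,p})_Q\big) = \s(R_{n,p}),
\]
where $Q$ is the prime generated by the image of $\fm_{R_n}$ and the new variables other than $x_{n+1,n+1}$. Equality of the local $F$-signature with $\s(R_{n,p})$ at its maximal ideal needs justification: localizing a polynomial extension at the extended maximal ideal preserves the $F$-signature.

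\emph{The value $\frac23$.} The ring $R_{2,p}=\Fp[a,b,c,d]/(ad-bc)$ is the Segre product of two polynomial rings in two variables, equivalently the toric ring of the cone over a square. Singh's formula \cite{Sin05}, or a direct count of monomials $x$ with $x\notin \fm^{[q]}$ in the free summand, gives $\s=\frac23$.

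\emph{Lower bound.} Choose a monomial order, for instance a diagonal order under which $\initial(\det X_n)$ is a single monomial, or better a weight degenerating $R_{n,p}$ flatly to a normal toric ring $B_n$. A hypersurface initial ideal is principal monomial, so it is not normal. I would therefore instead use a Sagbi/toric degeneration of the parametrization of $R_{n,p}$, for example via the Gelfand--Tsetlin-type degeneration of determinantal varieties, to get a normal affine semigroup ring $B_n$. The $F$-signature does not increase under flat degeneration to the special fiber. This follows from upper semicontinuity of Hilbert--Kunz-type quantities, or more concretely from comparing $\ell(R/I_e)$ with $\ell(\initial)$ using the description of the $F$-signature via the splitting ideals $I_e$ and Gr\"obner degeneration: $\initial(I_e(R))\supseteq I_e(B)$-type containments. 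This yields $\s(B_n)\le \s(R_{n,p})$. Finally, $\s(B_n)$ is computed as the volume of Von Korff's polytope $\{v\in\mathbb{R}^d : 0\le \langle v,\sigma_i\rangle<1\}$ in the dual cone \cite{VK11}. This reduces to a combinatorial volume, a Gelfand--Tsetlin or order-polytope computation counting linear extensions, and should produce $\frac{(n!)^2}{(2n-1)!}$, which equals $\frac23$ at $n=2$ as a sanity check.

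The main obstacle is this last part: constructing the right toric degeneration $B_n$ and proving the inequality $\s(B_n)\le\s(R_{n,p})$ for it. The volume computation is the second hardest step.
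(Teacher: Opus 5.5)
You match the paper on two of the three parts. Your monotonicity argument is the paper's: invert a corner variable, then use Polstra's semicontinuity and Yao's invariance under flat maps with regular fibres. Your version of the localization isomorphism, with the $2n$ extra polynomial variables, is in fact the dimensionally correct one. Quoting Singh for $s(R_{2,p})=\frac23$ is also fine.

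The gap is the lower bound, which is the substantive part of the statement. You never say what $B_n$ is, and you assert $\frac{(n!)^2}{(2n-1)!}$ (``should produce'') rather than derive it.

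Your reason for abandoning the weight route also confuses weights with monomial orders. Put $w(x_{ij})=1$ if $j\equiv i$ or $j\equiv i+1 \pmod n$, and $w(x_{ij})=0$ otherwise. The only permutations of weight $n$ are the identity and the cyclic shift, because the support graph is a $2n$-cycle and has exactly two perfect matchings. Hence $\operatorname{in}_w(\det X_n)=x_{11}\cdots x_{nn}+(-1)^{n-1}x_{12}x_{23}\cdots x_{n-1,n}x_{n1}$. The special fibre is therefore the binomial hypersurface $x_1\cdots x_n-y_1\cdots y_n$ with $n^2-2n$ free variables adjoined. More generally, any weight degeneration of $\det X_n$ with toric special fibre has this form.

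For such a degeneration you do not need the general semicontinuity principle you invoke, which is not a standard theorem in that generality. Your Gr\"obner comparison does the job directly: since $\operatorname{in}_w\bigl(\mathfrak m^{[q]}+(f^{q-1})\bigr)\supseteq \mathfrak m^{[q]}+\bigl(\operatorname{in}_w(f)^{q-1}\bigr)$, the hypersurface colon formula gives $a_q(R_{n,p})\ge q^{n^2-2n}a_q(B_n)$. The paper instead kills the $n^2-2n$ off-pattern variables, which form a regular sequence on the Cohen--Macaulay ring $R_{n,p}$, to get $B_n$ itself. It then uses Taylor's theorem that the $F$-signature does not increase modulo a regular sequence.

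What is missing entirely is the computation of $s(B_n)$, which is where most of the paper's work lies. The paper does it in two ways.
\begin{itemize}
\item It builds an explicit Gr\"obner basis of $\mathfrak m^{[p]}+(f^{p-1})$, which hinges on the congruence $\binom{p-1}{j}\binom{p-1}{v+k}\equiv\binom{p-1}{v}\binom{p-1}{j+k}\pmod p$. Counting standard monomials gives $a_p=p^n+\sum_{k=1}^{p-1}\bigl((k+1)^n-2k^n+(k-1)^n\bigr)(p-k)^n$. Characteristic-independence of toric $F$-signatures plus the $p\to\infty$ limit of P\'erez--Tucker--Yao then give $n(n-1)\int_0^1x^{n-2}(1-x)^n\,dx=\frac{(n!)^2}{(2n-1)!}$.
\item It uses the Han--Monsky representation ring: $a_q=\sum_{a+b\ge q}e_{M,a}e_{N,b}(a+b-q)$, with $e_{M,a}=P(q-a)$ for $a<q$ and $P(x)=(x+1)^n+(x-1)^n-2x^n$, followed by a Riemann-sum limit.
\end{itemize}
A direct von Korff volume computation for the cone of $B_n$ would also do. Some such computation has to be carried out. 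The Gelfand--Tsetlin or order-polytope count you gesture at is not attached to any specific cone, so without it the lower bound is not proved.
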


The inequalities hold in every positive characteristic, with bounds
depending only on the size of the matrix. For $n=2$ they recover
$\s(R_{2,p})=2/3$, while for $n=3$ they give
\[
 \frac{3}{10}\leq\s(R_{3,p})\leq\frac23.
\]
The monotonicity also describes how the $F$-signature changes as the size
of the generic determinant increases.

We also obtain an exact formula for a family of binomial hypersurfaces.
Set
\[
 B_{n,p}=\Fp[x_1,\ldots,x_n,y_1,\ldots,y_n]/
 (x_1\cdots x_n-y_1\cdots y_n).
\]
\begin{maintheorem}[Sections 5 and 6; Theorem 6.2]\label{thm:binomial}
For every prime $p$ and every $n\geq2$,
\[
 \s(B_{n,p})=\frac{(n!)^2}{(2n-1)!}.
\]
\end{maintheorem}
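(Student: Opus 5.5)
The plan is to treat $B_{n,p}$ as a normal affine semigroup ring and apply the toric formula for the $F$-signature from \cite{Sin05,VK11}: if $R=k[S]$ with $S=C\cap M$ for a full-dimensional rational polyhedral cone $C\subset M_\mathbb{R}$, then $\s(R)$ is the volume of
\[
P=\{m\in M_\mathbb{R} : 0\le \langle u_i,m\rangle<1 \text{ for all } i\},
\]
where the $u_i$ are the primitive inward normals of the facets of $C$ and volume is normalized so that $M$ has covolume $1$. In particular, the answer does not depend on $p$.

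First I will realize $B_{n,p}$ torically. The ring is the monoid algebra of the submonoid of $\mathbb{N}^n\times\mathbb{N}^n$ generated by the $2n$ elements $e_i$ and $f_j$, subject to the relation $\sum e_i=\sum f_j$. Concretely, $B_{n,p}\cong k[a_ib_j : 1\le i,j\le n]$... but a cleaner model uses the monomials $x_i\mapsto s_i$ and $y_j\mapsto t_j$ in a lattice of rank $2n-1$. Equivalently, the cone is the set of pairs $(\alpha,\beta)\in\mathbb{R}_{\ge0}^n\times\mathbb{R}_{\ge0}^n$ modulo the line $\mathbb{R}(\mathbf 1,-\mathbf 1)$. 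The dual description is more useful. Here $B_{n,p}$ is the ring of the cone $C\subset\mathbb{R}^{2n-1}$ whose facets correspond to the divisors $(x_i,y_j)$. I will check that these $n^2$ height-one primes are exactly the torus-invariant prime divisors, and that each is given by a linear functional $u_{ij}$ with
\[
u_{ij}(\text{monomial})=\min(\text{order in } x_i,\ \text{order in } y_j).
\]
Using the grading by the class group $\mathbb{Z}^{2n}/\langle(\mathbf1,-\mathbf1)\rangle$, monomials are parametrized by $(\alpha,\beta)$ up to adding $(c\mathbf1,c\mathbf1)$. Normalizing by $\min_i\alpha_i=0$, the functional becomes $u_{ij}=\alpha_i-\alpha_{\min}+\beta_j$ in suitable coordinates. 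More invariantly, I will take the lattice $M=\{(a,b)\in\mathbb{Z}^n\times\mathbb{Z}^n\}/\mathbb{Z}(\mathbf1,-\mathbf1)$ and $u_{ij}(a,b)=a_i+b_j$, which is well defined. I must verify normality and that these $u_{ij}$ are primitive and cut out exactly the semigroup of $B_{n,p}$. This follows since $x_i y_j$-type divisors have valuation one on a generator.

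The region is then
\[
P=\{(a,b) : 0\le a_i+b_j<1 \ \forall i,j\}/\mathbb{R}(\mathbf1,-\mathbf1).
\]
I fix the quotient by imposing $\min_i a_i=0$, or more symmetrically I integrate out the translation. The condition becomes $\max_i a_i+\max_j b_j<1$ and $\min_i a_i+\min_j b_j\ge0$. Setting $\min a=0$ and shifting $b$ gives $b\ge0$ with $\min b$ free... Concretely, the slice $\{\min_i a_i=0\}$ is a fundamental domain of covolume one for $\mathbb{Z}(\mathbf 1,-\mathbf1)$ after projection; this needs checking, and the key point is that the map $(a,b)\mapsto a$ has unimodular fibres. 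The constraints become $a\ge0$ with some $a_i=0$, $b\ge0$ with $\min b\ge0$, and $\max a+\max b<1$. The volume is then $n$ (the choice of the index with $a_i=0$) times the $(2n-1)$-dimensional volume of
\[
\{(a',b)\in\mathbb{R}_{\ge0}^{n-1}\times\mathbb{R}_{\ge0}^n : \max a'+\max b\le1\}.
\]
Ordering the coordinates, this volume equals
\[
\int_0^1 (n-1)s^{n-2}\cdot(1-s)^n\,ds=(n-1)B(n-1,n+1)=\frac{(n-1)!\,n!}{(2n-1)!}.
\]
Multiplying by $n$ gives $\frac{(n!)^2}{(2n-1)!}$, as desired. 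As a check, $n=2$ gives $4/6=2/3$.

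The main obstacle is the normalization. I need to confirm that the facets are exactly the $n^2$ functionals $a_i+b_j$, with no others and each primitive, and that the chosen slice carries the correct lattice volume. If the unimodularity of the slice is in doubt, I would instead count monomials directly: $a_e$ equals the number of monomials $\mu$ with $x_i^{\,p^e}y_j^{\,p^e}\nmid$-type conditions, i.e.\ $\mu$ of degree vector $(a,b)$ with $0\le a_i+b_j<p^e$. I would then count lattice points, which gives the same polytope scaled by $p^e$.
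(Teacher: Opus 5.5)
Your proposal is correct, and it takes a genuinely different route from the paper.

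The paper quotes von Korff's formula only to see that the $F$-signature of $B_{n,p}$ is independent of $p$. It then obtains the value in two other ways. The first is an explicit Gr\"obner basis for $\mathfrak m^{[p]}+(f^{p-1})$, a count of standard monomials at level $e=1$, and the P\'erez--Tucker--Yao theorem to pass to $p\to\infty$. The second is the Han--Monsky identity $a_q=e_{M\otimes N,q}=\sum_{a+b\ge q}e_{M,a}e_{N,b}(a+b-q)$ followed by a Riemann-sum argument.

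You instead evaluate the volume directly, and the checks you flag as the main obstacle all go through:
\begin{itemize}
\item Each $u_{ij}(a,b)=a_i+b_j$ vanishes on the $2n-2$ generators $e_{i'}$ $(i'\neq i)$ and $f_{j'}$ $(j'\neq j)$. These remain independent modulo $(\mathbf 1,-\mathbf 1)$, so $u_{ij}$ defines a facet, and it is primitive because $u_{ij}(e_i)=1$.
\item If $a_i+b_j\ge 0$ for all $i,j$, subtracting $(\min_i a_i)(\mathbf 1,-\mathbf 1)$ makes both $a$ and $b$ nonnegative. So these $n^2$ functionals cut out exactly the semigroup, which is therefore saturated, and there are no further facets.
\item Since $\{a_i=0\}\cap\mathbb Z^{2n}$ maps isomorphically onto $M$, each linear piece of your section $\{\min_i a_i=0\}$ is measure-preserving.
\end{itemize}
You should delete two stray remarks. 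First, $B_{n,p}\cong k[a_ib_j]$ holds only for $n=2$. Second, $u_{ij}$ is not a minimum of orders, since $u_{ij}(x_i)=1$.

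Your route is the shortest, is uniform in $p$, and needs neither a Gr\"obner basis nor a limit over primes. The lattice-point version of your count, sorted by $k=\max_i a_i$, gives
$a_q=q^n+\sum_{k=1}^{q-1}\bigl((k+1)^n-2k^n+(k-1)^n\bigr)(q-k)^n$
for every $q=p^e$. This is exactly the paper's standard-monomial formula, now valid at all levels rather than only at $q=p$, so the P\'erez--Tucker--Yao step becomes unnecessary.

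What the paper's Han--Monsky computation buys in return is independence from toric structure. It only uses that the defining polynomial is a sum of two polynomials in disjoint sets of variables whose $k$-object decompositions are known. So it is the method that could extend to non-binomial hypersurfaces $g(x)+h(y)$, where no volume formula is available.
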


This formula identifies
the comparison value that gives the lower bound in
Theorem~\ref{thm:determinantal}: a simple binomial specialization yields
$\s(R_{n,p})\geq\s(B_{n,p})$. In seeking effective lower bounds, we
examined all toric degenerations of generic determinantal rings known to
us. Among the constructions considered, this simple binomial
specialization yields the strongest lower bounds we obtained for the
$F$-signatures of determinantal hypersurfaces. Related computations for
two-variable binomial hypersurface pairs appear in~\cite{BCPT26}.

We give two computations of $\s(B_{n,p})$. The first constructs an
explicit Gr\"obner basis and counts standard monomials. The second uses the Han--Monsky
representation ring~\cite{HM93}.

The paper is organized as follows: in \Cref{sec:prelim}, we review the basic facts and notations for the F-signature. In \Cref{sec: bound}, we prove our main \Cref{thm: main}, giving upper and lower bounds on the $F$-signature of determinantal hypersurfaces. We provide two different ways for the computation of the $F$-signature of those toric degenerations. In \Cref{sec: gb}  and \Cref{sec: toric F-sig}, we compute their $F$-signature via explicit Gr\"{o}ber deformations. In \Cref{sec: Han-Monsky}, we provide a different way of computing those  $F$-signature using the Han-Monsky machine.

\medskip

\paragraph{\textbf{Acknowledgements}} Most of this work was carried out at SLMath (formerly, MSRI) during the Spring 2024 semester. We thank SLMath for the institutional support and hospitality. We would also like to thank Linquan Ma, Anurag Singh, Greg Smith and Kevin Tucker for valuable discussions. Hang Huang was partially supported by the National Science Foundation under Award No. DMS-2302375.

 \section{Prelimanary} \label{sec:prelim}

\subsection{\texorpdfstring{$F$}{F}-signature and colon ideals}
Let $(R,\mathfrak m,k)$ be an $F$-finite local domain of characteristic
$p>0$ and dimension $d$. For $q=p^e$, let $F_*^eR$ denote $R$ with the
$R$-module structure $r\cdot F_*^ex=F_*^e(r^qx)$; $F$-finiteness means
that $F_*R$ is finitely generated over $R$. Write
\[
 F_*^eR\cong R^{\oplus b_e(R)}\oplus M_e,
\]
where $M_e$ has no nonzero free direct summand. Put
$a_q(R)=\frac{b_e(R)}{[k:k^q]}$. The \emph{$F$-signature} is
\begin{equation}\label{eq:prelim-fsignature}
 \s(R)=\lim_{e\to\infty}\frac{b_e(R)}{q^d[k:k^q]}
     =\lim_{e\to\infty}\frac{a_q(R)}{q^d}.
\end{equation}
It was introduced by Huneke and Leuschke~\cite{HL02}, and the existence
of the limit was proved by Tucker~\cite{Tuc12}. If $k$ is perfect, then
$[k:k^q]=1$, so $a_q(R)$ is the number of free summands of $F_*^eR$.
One has $0\leq \s(R)\leq1$, with $\s(R)=1$ when $R$ is regular
and $\s(R)>0$ when $R$ is strongly
$F$-regular~\cite{HL02,AL03}. For a positively graded ring with $R_0=k$,
$\s(R)$ denotes the $F$-signature at its homogeneous maximal ideal.

\smallskip
\noindent\textbf{Colon-ideal formulas.}
Let $(S,\mathfrak n,k)$ be an $F$-finite regular local ring and let
$R=S/J$ be a domain of dimension $d$. For an ideal $K\subseteq S$,
write $K^{[q]}=(g^q\mid g\in K)$. By
\cite[Corollary~4.14]{BST12},
\begin{equation}\label{eq:prelim-colon}
 a_q(R)=l_S\!\left(S/\bigl(\mathfrak n^{[q]}:(J^{[q]}:J)\bigr)\right),
\end{equation}
where $l_S$ denotes module length. Thus $\s(R)$ is obtained by dividing
this length by $q^d$ and taking the limit as $e\to\infty$.
For $J=(f)$, with $0\ne f\in\mathfrak n$, one has
$((f^q):(f))=(f^{q-1})$, and hence
\begin{equation}\label{eq:prelim-hypersurface}
\begin{split}
 a_q(S/(f))
 &=l_S\!\left(S/(\mathfrak n^{[q]}:f^{q-1})\right)\\
 &=q^t-l_S\!\left(S/(\mathfrak n^{[q]}+(f^{q-1}))\right),
 \qquad t=\dim S.
\end{split}
\end{equation}
The second equality follows from multiplication by $f^{q-1}$ on
$S/\mathfrak n^{[q]}$, whose length is $q^t$.


\smallskip
\noindent\textbf{Toric rings.}
For normal affine toric rings, $F$-signature can be computed as a lattice
volume. More precisely, let $N$ be a lattice with dual $M$, and let
$\sigma\subset N_{\mathbb R}$ be a full-dimensional strongly convex
rational polyhedral cone with primitive ray generators $v_1,\ldots,v_r$.
For $R=k[\sigma^\vee\cap M]$, von Korff's formula
\cite[Theorem~3.3 and Remark~6.1]{VK11} gives
\[
 s(R)=\operatorname{vol}_M
 \{u\in M_{\mathbb R}\mid0\leq\langle u,v_i\rangle<1
                              \text{ for every }i\},
\]
where $s(R)$ is taken at the maximal monomial ideal, and the measure is
normalized so that a fundamental parallelepiped of $M$ has volume one.
For a fixed cone and lattice, the value is therefore independent of the
characteristic. This gives the characteristic independence used in
Section~5. See also Singh~\cite{Sin05} for computations for affine
semigroup rings, including Segre products and Veronese subrings.

\subsection{Han-Monsky representation ring}In this subsection we fix a field $k$ of characteristic $p>0$. We introduce the concept of representation ring where we make computations. The concept of the representation ring first appears in Han and Monsky's paper \cite{HanMonskySomeSurprisingHKFunctions}, although the computational results are rooted from the results in \cite{HanHilbertKunzdiagonal}. Following \cite{HanMonskySomeSurprisingHKFunctions}, we say a $k$-object is a finitely generated $k[T]$-module annihilated by a power of $T$. Let $\Gamma^+$ be the set of isomorphic classes of $k$-objects. If $M$, $N$ are 
$k$-objects, then so is $M\oplus N$. For two $k$-objects $M,N$, we give a $k$-object structure on $M\otimes_k N$ via $T(m\otimes n)=Tm\otimes n+m\otimes Tn$. We see that $\Gamma^+$ is a commutative semiring under $\oplus$ and $\otimes$, and by the structure theorem of modules over PID, it is a free abelian semigroup over $\delta_i$ which is the class of $k[T]/(T^i),i \geq 1$; thus it is cancellative. Let $\Gamma$ be the group generated by $\Gamma^+$, then it has a commutative ring structure with addition given by $\oplus$ and multiplication given by $\otimes$. We call $\Gamma$ the \textbf{Representation ring}. We see $\Gamma$ is a free abelian group over $\delta_i$, consists of formal difference of two isomorphic classes of $k$-objects, and $\delta_1$ is the unit of $\Gamma$. For a $k$-object $M$, we denote its class in $\Gamma$ by $\gamma_M=\sum_{i \geq 1}e_{M,i}\delta_i$, where $e_{M,i}$ is the multiplicity of $k[T]/(T^i)$ in $M$. Under this notation we have $\gamma_{M\oplus N}=\gamma_M+\gamma_N$ and $\gamma_{M\otimes N}=\gamma_M\gamma_N$. For a $k$-object $M$, denote $l_M(i): \mathbb{Z} \to \mathbb{Z}$ to be the following function
\begin{equation*}
l_M(i) = \left\{
        \begin{array}{ll}
            0 & \quad i \leq 0 \\
            l(M/T^iM) & \quad i \geq 1
        \end{array}
    \right.
\end{equation*}
If $\gamma \in \Gamma$ is the class of a $k$-object $M$, we define $e_{\gamma,i}=e_{M,i}$ and $l_\gamma(i)=l_M(i)$.

\begin{Pn}\label{prop: e_M and l_M relation}
Let $M$ be a $k$-object. Then: 
\begin{enumerate}
\item \begin{equation*}
l_M(n) = \left\{
        \begin{array}{ll}
            0 & \quad n \leq 0 \\
            e_{M,1}+2e_{M,2}+\ldots+ne_{M,n}+ne_{M,n+1}+\ldots & \quad n \geq 1
        \end{array}
    \right.
    \end{equation*}
\item \begin{equation*}
l_{M}(n)-l_M(n-1) = \left\{
        \begin{array}{ll}
            0 & \quad n \leq 0\\
            e_{M,n}+e_{M,n+1}+\ldots & \quad n \geq 1
        \end{array}
    \right.
\end{equation*}
\item For any $n \geq 1$,
\begin{equation*}
e_{M,n}=l_M(n)-l_M(n-1)-(l_{M}(n+1)-l_M(n))\\
=2l_M(n)-l_M(n+1)-l_M(n-1)
\end{equation*}
\end{enumerate}
\end{Pn}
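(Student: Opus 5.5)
The argument reduces everything to a single cyclic module, since length is additive over direct sums. By the structure theorem quoted above, a $k$-object decomposes as $M\cong\bigoplus_{i\geq1}\bigl(k[T]/(T^i)\bigr)^{\oplus e_{M,i}}$, with only finitely many $e_{M,i}\neq 0$ because $M$ is finitely generated. For $n\geq1$ we have $T^nM\cong\bigoplus_i T^n\bigl(k[T]/(T^i)\bigr)^{\oplus e_{M,i}}$, and therefore
\[
 M/T^nM\cong\bigoplus_{i\geq1}\Bigl(k[T]/(T^{\min(i,n)})\Bigr)^{\oplus e_{M,i}}.
\]
Since $l\bigl(k[T]/(T^m)\bigr)=m$, taking lengths gives $l_M(n)=\sum_{i\geq1}\min(i,n)\,e_{M,i}$. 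This is exactly the expression in (1): $ie_{M,i}$ for $i\leq n$ and $ne_{M,i}$ for $i>n$. For $n\leq0$, (1) holds by the definition of $l_M$. The sum is finite, so there are no convergence issues.

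For (2), take $n\geq1$ and subtract the formula for $n-1$. When $n=1$ the convention $l_M(0)=0$ agrees with the formula $\sum_i\min(i,0)e_{M,i}=0$, so the same closed form can be used throughout. Then
\[
 l_M(n)-l_M(n-1)=\sum_i\bigl(\min(i,n)-\min(i,n-1)\bigr)e_{M,i}.
\]
The coefficient $\min(i,n)-\min(i,n-1)$ equals $1$ if $i\geq n$ and $0$ if $i\leq n-1$. This gives $\sum_{i\geq n}e_{M,i}$. For $n\leq 0$ both terms vanish, so the difference is $0$.

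For (3), with $n\geq1$, apply (2) at $n$ and at $n+1$ and subtract. This gives
\[
 \sum_{i\geq n}e_{M,i}-\sum_{i\geq n+1}e_{M,i}=e_{M,n}.
\]
Rearranging $\bigl(l_M(n)-l_M(n-1)\bigr)-\bigl(l_M(n+1)-l_M(n)\bigr)$ yields $2l_M(n)-l_M(n+1)-l_M(n-1)$.

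There is no real obstacle here. The only point needing care is the boundary case $n=1$, where the convention $l_M(0)=0$ must match the closed formula. Because both are $0$, a single uniform computation covers every $n\geq1$.
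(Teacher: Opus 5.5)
Your proof is correct and follows the same route as the paper, which simply states that (1) follows from the definitions of $l_M$ and $e_{M,n}$ and that (2) and (3) follow from (1). Your version fills in the details the paper omits: the cyclic decomposition, the identity $l_M(n)=\sum_i \min(i,n)\,e_{M,i}$, and the boundary check at $n=1$.
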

\begin{proof}
(1) follows from the definition of $l_M(n)$ and $e_{M,n}$; (2) and (3) follow from (1).
\end{proof}
We are interested in one particular case of $l_M(i)$.
\begin{Pn}\label{prop: e_Mq special case}
Let $q$ be a positive integer, $M$ be a $k$-object, and assume $T^qM=0$. Then $l_M(q-1)=l(M)-e_{M,q}$.   
\end{Pn}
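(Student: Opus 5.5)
The plan is to apply part (1) of \Cref{prop: e_M and l_M relation} with $n=q-1$, and compare the resulting expression with the total length $l(M)$. We may assume $q\geq 2$. For $q=1$ the hypothesis $TM=0$ forces $M=k^{\oplus e_{M,1}}$. Since $l_M(0)=0$ by definition, the claim then reads $0=e_{M,1}-e_{M,1}$, which holds.

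First, the hypothesis $T^qM=0$ pins down which cyclic summands can occur. Since $k[T]/(T^i)$ is not killed by $T^q$ when $i>q$, we get $e_{M,i}=0$ for all $i>q$. So the decomposition of $M$ involves only $\delta_1,\ldots,\delta_q$, and
\[
l(M)=\sum_{i=1}^{q} i\,e_{M,i}.
\]

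Next, we evaluate $l_M(q-1)$. By part (1) of \Cref{prop: e_M and l_M relation} with $n=q-1\geq 1$,
\[
l_M(q-1)=\sum_{i=1}^{q-1} i\,e_{M,i}+(q-1)\sum_{i\geq q} e_{M,i}.
\]
Using the vanishing from the first step, this becomes
\[
l_M(q-1)=\sum_{i=1}^{q-1} i\,e_{M,i}+(q-1)e_{M,q}.
\]
Subtracting this from the formula for $l(M)$ leaves $q\,e_{M,q}-(q-1)e_{M,q}=e_{M,q}$. This is exactly the claim $l_M(q-1)=l(M)-e_{M,q}$.

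There is no real obstacle here. The only points needing care are the vanishing $e_{M,i}=0$ for $i>q$ and the edge case $q=1$, where part (1) does not apply directly and we instead use $l_M(0)=0$.
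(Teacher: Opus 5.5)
Your proof is correct, but it takes a slightly different route from the paper. The paper never uses the explicit decomposition of $M$. It observes that $T^qM=0$ forces $l_M(q)=l_M(q+1)=l(M)$. It then substitutes this into the second-difference identity of part (3), $e_{M,q}=2l_M(q)-l_M(q+1)-l_M(q-1)$, which gives $e_{M,q}=l(M)-l_M(q-1)$ at once.

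That argument is a one-liner and covers $q=1$ uniformly, since part (3) holds for all $n\geq 1$ and $l_M(0)=0$ is built into the definition. So it needs neither the vanishing of $e_{M,i}$ for $i>q$ nor a separate edge case.

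Your argument via part (1) is more explicit. It shows that each summand $\delta_i$ with $i<q$ contributes the same amount $i$ to both $l(M)$ and $l_M(q-1)$. Each copy of $\delta_q$ contributes $q$ to the first and $q-1$ to the second, which makes it clear why the discrepancy is exactly $e_{M,q}$. The cost is the extra bookkeeping: the vanishing above $q$ and the $q=1$ case. You handle both correctly.
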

\begin{proof}
By assumption $l(M)=l(M/T^qM)=l(M/T^{q+1}M)$, in other words, $l(M)=l_M(q)=l_M(q+1)$. So $e_{M,q}=2l(M)-l(M)-l_M(q-1)=l(M)-l_M(q-1)$.
\end{proof}

For $a,b,c \in \mathbb{N}$, the $k$-object $\delta_a\delta_b$ is the class of $k[T_1,T_2]/(T_1^a,T_2^b)$ as a $k$-object with respect to the action $T=T_1+T_2$. Denote $B(a,b,c)=e_{\delta_a\delta_b,c}$ and $D(a,b,c)=l_{\delta_a\delta_b}(c)$. We have:
\begin{Pn}\label{prop: delta_a times b}
For any $a,b,c \in \mathbb{Z}$, we have:
\begin{enumerate}
\item $\delta_a\delta_b=\sum_{c \geq 1} B(a,b,c)\delta_c$;
\item $D(a,b,c)=l(k[T_1,T_2]/(T_1^a,T_2^b,(T_1+T_2)^c))$;
\item For
$c \geq 1$, $B(a,b,c)=2D(a,b,c)-D(a,b,c+1)-D(a,b,c-1)$.
\end{enumerate}
\end{Pn}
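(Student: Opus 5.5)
The three parts follow almost directly from the definitions together with Proposition~\ref{prop: e_M and l_M relation}, once $\delta_a\delta_b$ is identified with a concrete module. For $a,b\ge 1$, $\delta_a$ is the class of $k[T_1]/(T_1^a)$ and $\delta_b$ the class of $k[T_2]/(T_2^b)$. By the definition of the product in $\Gamma$, $\delta_a\delta_b$ is the class of
\[
 M=k[T_1]/(T_1^a)\otimes_k k[T_2]/(T_2^b)\cong k[T_1,T_2]/(T_1^a,T_2^b),
\]
with $T$ acting by $T(m\otimes n)=Tm\otimes n+m\otimes Tn$. Under the isomorphism this action is multiplication by $T_1+T_2$, so I will first check this identification (the tensor-product action corresponds to $T_1\otimes 1+1\otimes T_2$). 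Note also that $M$ is a $k$-object: it is finite dimensional, and $(T_1+T_2)^{a+b-1}=0$ in $M$. If $a\le 0$ or $b\le 0$, I would adopt the convention $\delta_a=0$, so that both sides of each claim are $0$: the quotient ring in (2) is zero, since $T_1^a$ or $T_2^b$ is then a unit or $1$.

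For (1), I will write $\gamma_M=\sum_{c\ge1}e_{M,c}\delta_c$, which is how $\gamma_M$ was defined. Since $B(a,b,c)=e_{\delta_a\delta_b,c}=e_{M,c}$ by definition, the expansion is immediate.

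For (2), when $c\ge1$ I have $D(a,b,c)=l_M(c)=l(M/T^cM)$. Here $T^cM$ is the image of the ideal generated by $(T_1+T_2)^c$, so $M/T^cM\cong k[T_1,T_2]/(T_1^a,T_2^b,(T_1+T_2)^c)$. The only point needing care is length: as $k$-vector spaces and as $k[T]$-modules the length is the same, and it is also the $k[T_1,T_2]$-length. This holds because every composition factor is $k$, the residue field in both settings. For $c\le0$, $l_M(c)=0$ by definition, and the right-hand side also vanishes if one reads $(T_1+T_2)^c$ for $c=0$ as $1$. For $c<0$ this is simply a convention, which I would state explicitly.

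For (3), I will apply Proposition~\ref{prop: e_M and l_M relation}(3) to $M$ with $n=c\ge1$. That gives $e_{M,c}=2l_M(c)-l_M(c+1)-l_M(c-1)$, and by (1) and (2) this is exactly $B(a,b,c)=2D(a,b,c)-D(a,b,c+1)-D(a,b,c-1)$.

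There is no real obstacle here. The only mildly delicate steps are verifying that the $\otimes$-action corresponds to multiplication by $T_1+T_2$, and fixing conventions at the boundary cases $a,b,c\le 0$.
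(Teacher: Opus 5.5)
Your proposal is correct and matches the paper's proof: (1) and (2) come straight from the definitions once $\delta_a\delta_b$ is identified with $k[T_1,T_2]/(T_1^a,T_2^b)$ with $T$ acting as $T_1+T_2$, and (3) is Proposition~\ref{prop: e_M and l_M relation}(3) applied to that module. Your checks of the tensor-product action, the equality of lengths, and the conventions for nonpositive indices fill in details the paper leaves implicit.
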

\begin{proof}
(1) and (2) are clear from definition and (3) is true by (2) and \Cref{prop: e_M and l_M relation} (3).    
\end{proof}
Suppose $\gamma_M=\sum_a e_{M,a}\delta_a$ and $\gamma_N=\sum_b e_{N,b}\delta_b$, then using the bilinearity of the product, we have $\gamma_{M\otimes N}=\gamma_M\gamma_N=\sum_{a,b \geq 1}e_{M,a}e_{N,b}\delta_a\delta_b$. Thus we have:
\begin{Pn}\label{prop: e_a times b}
Let $M,N$ be two $k$-objects. Then:
\begin{enumerate}
\item $e_{M \otimes N, c}=\sum_{a,b\geq 1}e_{M,a}e_{N,b}B(a,b,c)$.
\item $l_{M \otimes N}(c)=\sum_{a,b \geq 1}e_{M,a}e_{N,b}D(a,b,c)$.
\end{enumerate}
\end{Pn}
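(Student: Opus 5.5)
The plan is to reduce both parts to bilinearity of the product in $\Gamma$ and to the definitions of $B$ and $D$. First I will write the decompositions $\gamma_M=\sum_{a\geq1}e_{M,a}\delta_a$ and $\gamma_N=\sum_{b\geq1}e_{N,b}\delta_b$. These are finite sums, because a $k$-object is finitely generated and killed by a power of $T$. The paper notes $\gamma_{M\otimes N}=\gamma_M\gamma_N$, so I expand the product using distributivity in $\Gamma$:
\[
\gamma_{M\otimes N}=\sum_{a,b\geq1}e_{M,a}e_{N,b}\,\delta_a\delta_b .
\]
Then I substitute \Cref{prop: delta_a times b}(1), $\delta_a\delta_b=\sum_{c\geq1}B(a,b,c)\delta_c$. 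This gives $\gamma_{M\otimes N}=\sum_{c}\bigl(\sum_{a,b}e_{M,a}e_{N,b}B(a,b,c)\bigr)\delta_c$. Since $\Gamma$ is free abelian on the $\delta_c$, I can compare coefficients of $\delta_c$, and this yields (1).

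For (2), I first observe that $\gamma\mapsto l_\gamma(c)$ extends additively to $\Gamma$. By \Cref{prop: e_M and l_M relation}(1), $l_M(c)=\sum_{i\geq1}\min(i,c)\,e_{M,i}$ for $c\geq1$, and $l_M(c)=0$ for $c\leq0$. This is a $\mathbb Z$-linear function of the coefficient vector $(e_{M,i})_i$. Equivalently, $l_{M\oplus M'}(c)=l_M(c)+l_{M'}(c)$, since $(M\oplus M')/T^c=M/T^c\oplus M'/T^c$.

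I then apply this linear functional to the expansion from part (1). Since $M\otimes N\cong\bigoplus_{a,b}(k[T]/T^a\otimes k[T]/T^b)^{e_{M,a}e_{N,b}}$ as $k[T]$-modules, additivity gives $l_{M\otimes N}(c)=\sum_{a,b}e_{M,a}e_{N,b}\,l_{\delta_a\delta_b}(c)$. By definition $l_{\delta_a\delta_b}(c)=D(a,b,c)$, which proves (2).

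There is no real obstacle here. The only point needing care is that $\otimes$ distributes over $\oplus$ as $k[T]$-modules with the diagonal action $T(m\otimes n)=Tm\otimes n+m\otimes Tn$, which is immediate. One should also remember that all sums are finite, so rearranging them is legitimate.
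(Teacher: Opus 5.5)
Your proposal is correct and follows essentially the same route as the paper. The paper also expands $\gamma_{M\otimes N}=\gamma_M\gamma_N=\sum_{a,b}e_{M,a}e_{N,b}\delta_a\delta_b$ by bilinearity and reads off both parts; your explicit remark that $l_M(c)$ is additive over direct sums supplies the step the paper leaves implicit for (2).
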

We see from \Cref{prop: delta_a times b} and \Cref{prop: e_a times b} that the value of $B(a,b,c)$ is the key step in the computation which can be derived from the value of $D(a,b,c)$. We cite the following result from \cite{HanHilbertKunzdiagonal}.
\begin{Pn}
Let $a,b,c \in \mathbb{N}$, and $q$ be a power of $p$.
\begin{enumerate}
\item $D(a,b,c)$ is stable under any permutation of $a,b,c$.
\item If $a,b \leq q \leq c$, then $D(a,b,c)=ab$.
\item If $0 \leq a,b \leq q,c=1$. Then $D(a,b,c)=\min\{a,b\}$.
\item If $0 \leq a,b,c \leq q$, then $D(a,b,c)=D(a,q-b,q-c)+a(b+c-q)$.
\end{enumerate}
\end{Pn}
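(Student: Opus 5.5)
The plan is to work directly with the description $D(a,b,c)=l\bigl(k[T_1,T_2]/(T_1^a,T_2^b,(T_1+T_2)^c)\bigr)$ from the preceding proposition, and to treat the four items in order of increasing difficulty.

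\textbf{Item (1): symmetry.} I would make the linear change of variables $T_1\mapsto T_1$, $T_2\mapsto -T_1-T_2$. This is an automorphism of $k[T_1,T_2]$ sending the triple of generators $(T_1,T_2,T_1+T_2)$ to $(T_1,-(T_1+T_2),-T_2)$ up to sign. Since signs do not affect the ideal, this transposition swaps the roles of $b$ and $c$. Together with the obvious symmetry $T_1\leftrightarrow T_2$, which swaps $a$ and $b$, these generate all permutations of $(a,b,c)$.

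\textbf{Item (2).} If $a,b\le q\le c$, then $(T_1+T_2)^q=T_1^q+T_2^q$ in characteristic $p$. This element lies in $(T_1^a,T_2^b)$, and hence so does $(T_1+T_2)^c$. The length is therefore $l\bigl(k[T_1,T_2]/(T_1^a,T_2^b)\bigr)=ab$.

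\textbf{Item (3).} Using (1), I would move the exponent $1$ into the first slot. This reduces to $l\bigl(k[T_1,T_2]/(T_1,T_2^{b},T_2^{a})\bigr)$, after substituting $T_1=-T_2$. That length equals $\min\{a,b\}$, and the hypothesis $a,b\le q$ is not even needed.

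\textbf{Item (4): the key step.} Here I would use linkage (Gorenstein duality) inside the complete intersection $A=k[T_1,T_2]/(T_1^q,T_2^q)$, which is Artinian Gorenstein of length $q^2$. Put $S=k[T_1,T_2]/(T_1^a)$. Since $(T_1+T_2)^q=T_1^q+T_2^q$, the ideal $(T_1^a,(T_1+T_2)^q)$ equals $(T_1^a,T_2^q)$ when $a\le q$. So $C=S/(T_2^q)=S/((T_1+T_2)^q)$ is a complete intersection of length $aq$, and it is Gorenstein. In $C$ consider the ideal $I=(T_2^b,(T_1+T_2)^c)$. Because $T_2^q=0$ and $(T_1+T_2)^q=0$ in $C$, linkage with respect to the regular sequence gives
\[
0:_C(T_2^{b})=(T_2^{q-b}),\qquad 0:_C((T_1+T_2)^{c})=((T_1+T_2)^{q-c}).
\]
I would aim to prove $0:_C I=(T_2^{q-b})\cap((T_1+T_2)^{q-c})$ and then compute its length. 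The Gorenstein duality $l(C/I)=l(C)-l(0:_C I)$ gives $D(a,b,c)=aq-l(0:_C I)$.

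To compute $l(0:_C I)$, the intersection of the two principal ideals should be identified with a shifted copy of $C/(T_2^{b},(T_1+T_2)^{c})$ with complementary exponents. Concretely, I would use the inclusion–exclusion identity
\[
l(J_1\cap J_2)=l(J_1)+l(J_2)-l(J_1+J_2),
\]
with $l(J_1)=ab$ and $l(J_2)=ac$, since each of $0:T_2^b$ and $0:(T_1+T_2)^c$ has colength $a(q-b)$, respectively $a(q-c)$. The remaining term is $l(J_1+J_2)=aq-D(a,q-b,q-c)$. Assembling these,
\[
D(a,b,c)=aq-\bigl(ab+ac-aq+D(a,q-b,q-c)\bigr),
\]
which differs from the target formula in sign structure. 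So I expect the precise bookkeeping to force using the quotient $C/(J_1+J_2)$ directly. The intended identity is $D(a,b,c)-D(a,q-b,q-c)=a(b+c-q)$, which is exactly the statement that $l(C/I)$ and $l(C/(J_1+J_2))$ differ by $ab+ac-aq$.

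\textbf{Main obstacle.} The linkage statement $0:_C(J_1'+J_2')=(0:J_1')\cap(0:J_2')$ is automatic. The delicate point is getting the lengths right so that the correct combination appears, and checking the edge cases where $b+c<q$. In those cases terms like $a(b+c-q)$ are negative, and one of the exponents should be handled by first applying (1) or (2). Since this item is cited from Han's work, I would verify the sign conventions against small cases such as $q=p=2$ before finalizing.
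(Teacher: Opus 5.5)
\textbf{Items (1)--(3).} These are correct. In (3), once the exponent $1$ sits in the first slot you kill $T_1$ rather than substitute $T_1=-T_2$, but either way the length is $\min\{a,b\}$. The paper itself gives no proof of this proposition; it quotes it from Han.

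\textbf{The gap in item (4).} It sits exactly at your duality step. In an Artinian Gorenstein local ring $C$ one has $0:_C I\cong\operatorname{Hom}_C(C/I,C)\cong\operatorname{Hom}_C(C/I,E_C(k))$, so Matlis duality gives $l(0:_C I)=l(C/I)$. The formula you used, $l(C/I)=l(C)-l(0:_C I)$, is false: putting $I=0$ would give $l(C)=0$. This misstatement is the entire source of the ``sign structure'' mismatch you noticed. It is not a bookkeeping issue, nor one of edge cases with $b+c<q$. Your closing suggestion to ``use the quotient $C/(J_1+J_2)$ directly'' only restates the target identity, so as written the proposal does not prove (4).

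\textbf{The fix.} With the correct duality, and with $0:_C I=J_1\cap J_2$, your setup closes at once:
\[
D(a,b,c)=l(C/I)=l(0:_C I)=l(J_1\cap J_2)=l(J_1)+l(J_2)-l(J_1+J_2)=ab+ac-\bigl(aq-D(a,q-b,q-c)\bigr),
\]
which is exactly $D(a,q-b,q-c)+a(b+c-q)$. Everything else you wrote is right:
\begin{itemize}
\item For $1\le a\le q$, the ring $C=k[T_1,T_2]/(T_1^a,T_2^q)=k[T_1,U]/(T_1^a,U^q)$, with $U=T_1+T_2$, is an Artinian complete intersection of length $aq$.
\item $D(a,b,c)=l(C/I)$ because $T_2^q\in(T_2^b)$.
\item $0:_C T_2^b=(T_2^{q-b})$ and $0:_C U^c=(U^{q-c})$, since $C$ is free over $k[T_1]/(T_1^a)$ with basis the powers of $T_2$ (resp.\ $U$) below $q$.
\item $l(J_1)=ab$, $l(J_2)=ac$, and $l(C/(J_1+J_2))=D(a,q-b,q-c)$.
\end{itemize}
No step uses $b+c\ge q$, so the boundary values $b,c\in\{0,q\}$ need no separate treatment, and $a=0$ is trivial.
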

The value of $D(a,b,c)$ is easier to compute when one of $a,b,c$ is close to $0$ or $q$. We compute its value when $c=q-1$:
\begin{Cor}
If $0 \leq a,b\leq q$, then if $a+b \leq q$, $D(a,b,q-1)=ab$; if $a+b \geq q$, $D(a,b,q-1)=ab-a-b+q$.    
\end{Cor}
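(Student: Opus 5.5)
The plan is to apply the cited Proposition with $c=q-1$, splitting into the cases $a+b\le q$ and $a+b\ge q$.

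\textbf{Case $a+b\le q$.} Apply part (4) of the Proposition with $c=q-1\le q$:
\[
D(a,b,q-1)=D(a,q-b,1)+a(b+q-1-q)=D(a,q-b,1)+a(b-1).
\]
Since $0\le a,q-b\le q$, part (3) gives $D(a,q-b,1)=\min\{a,q-b\}$. The hypothesis $a+b\le q$ means $a\le q-b$, so the minimum equals $a$. Hence
\[
D(a,b,q-1)=a+a(b-1)=ab.
\]

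\textbf{Case $a+b\ge q$.} The same two steps apply, but now $q-b\le a$, so $\min\{a,q-b\}=q-b$. This gives
\[
D(a,b,q-1)=q-b+ab-a=ab-a-b+q.
\]
When $a+b=q$ both formulas give $ab$, so the two cases agree on their overlap.

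\textbf{Boundary values.} Only the edge cases need checking: part (4) is stated for $0\le a,b,c\le q$, and $c=q-1$ lies in this range for $q\ge1$. Part (3) with an argument equal to $0$ reads $D(a,0,1)=0=\min\{a,0\}$, which is consistent. So no separate treatment of $b=q$ or $a=0$ is needed.

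The main obstacle is only confirming that the hypotheses of (3) and (4) are met, namely $q-b\in[0,q]$; this holds since $0\le b\le q$.
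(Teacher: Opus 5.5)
Your proof is correct and is essentially the paper's argument: apply part (4) with $c=q-1$ to get $D(a,b,q-1)=D(a,q-b,1)+a(b-1)$, evaluate $D(a,q-b,1)=\min\{a,q-b\}$ by part (3), and split on whether $a\le q-b$. Your checks of the hypotheses of (3) and (4), and of agreement on the overlap $a+b=q$, are sound extras that the paper leaves implicit.
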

\begin{proof}
We see $D(a,b,q-1)=D(a,q-b,1)+(b-1)a=\min\{a,q-b\}+(b-1)a$. In the first case it is equal to $a+(b-1)a=ab$, and in the second case it is equal to $q-b+(b-1)a=ab-a-b+q$.   
\end{proof}
\begin{Cor}\label{cor: l_a times b q}
Let $a,b \in \mathbb{N}$, $q$ be a power of $p$ such that $0 \leq a,b \leq q$.
\begin{enumerate}
\item If $a+b \leq q$, then $l_{\delta_a\delta_b}(q-1)=ab$; and for any $c \geq q$, $l_{\delta_a\delta_b}(c)=ab$.
\item If $a+b \geq q$, then $l_{\delta_a\delta_b}(q-1)=ab-a-b+q$; and for any $c \geq q$, $l_{\delta_a\delta_b}(c)=ab$.
\end{enumerate}
\end{Cor}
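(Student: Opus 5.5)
The proof has two halves: the value of $l_{\delta_a\delta_b}$ at $q-1$, and its value for $c\geq q$. For the first half, I will identify $l_{\delta_a\delta_b}(c)$ with $D(a,b,c)$ via \Cref{prop: delta_a times b}(2). Concretely, $\delta_a\delta_b$ is the class of $M=k[T_1,T_2]/(T_1^a,T_2^b)$ with $T=T_1+T_2$, and by definition $l_M(c)=l(M/T^cM)=D(a,b,c)$ for $c\geq 1$. The statement about $l_{\delta_a\delta_b}(q-1)$ in both cases (1) and (2) is then exactly the preceding Corollary computing $D(a,b,q-1)$. The boundary cases need a brief check: when $q-1\geq 1$ nothing changes, and when $q=1$ one has $a,b\in\{0,1\}$ and both sides can be compared directly. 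If $a=0$ or $b=0$, the module is zero and every formula gives $0$ (in case (2) with $a=0$, $b=q$ one gets $0-0-q+q=0$), so these degenerate cases are consistent.

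For the second half, the claim $l_{\delta_a\delta_b}(c)=ab$ for $c\geq q$, I will use the cited Proposition, part (2). This gives $D(a,b,c)=ab$ whenever $a,b\leq q\leq c$, which is precisely our hypothesis. Equivalently, one can argue directly: in characteristic $p$ we have $(T_1+T_2)^q=T_1^q+T_2^q$, and this vanishes in $M$ since $a,b\leq q$. Hence $T^c M=0$ for $c\geq q$, and $l_M(c)=l(M)=ab$. I will include this direct Frobenius argument, since it is cleaner and makes clear why $q$ must be a power of $p$.

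No step is a real obstacle; the content is all in the previously cited results of Han. The only care needed is bookkeeping: the function $l_M(c)$ is defined to be $0$ for $c\leq 0$, so the degenerate case $q=1$ (where $q-1=0$) must be handled separately. There, $l(q-1)=0$, while the formula gives $ab$ when $a+b\leq 1$ (which is $0$) and $ab-a-b+1$ when $a+b\geq 1$. For $(a,b)=(1,1)$ the latter gives $0$, and for $(1,0)$ or $(0,1)$ it gives $0$. So the statement holds in all cases.
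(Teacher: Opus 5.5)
Your proposal is correct and follows the paper's route: the paper gives no separate proof, treating the corollary as immediate from the preceding corollary on $D(a,b,q-1)$ together with part (2) of Han's proposition, $D(a,b,c)=ab$ for $a,b\le q\le c$. Your direct Frobenius argument, $(T_1+T_2)^q=T_1^q+T_2^q=0$ in $k[T_1,T_2]/(T_1^a,T_2^b)$, is a valid self-contained substitute for that citation, and your checks of the degenerate cases $a=0$, $b=0$, $q=1$, which the paper leaves implicit, are correct.
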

\begin{Cor}\label{cor: e_a times b q} Let $a,b \in \mathbb{N}$, $q$ be a power of $p$ such that $0 \leq a,b \leq q$.
If $a+b \leq q$, then $e_{\delta_a\delta_b,q}=0$; if $a+b \geq q$, then $e_{\delta_a\delta_b,q}=a+b-q$.
\end{Cor}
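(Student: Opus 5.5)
The plan is to read off $e_{\delta_a\delta_b,q}$ from the values of $l_{\delta_a\delta_b}$ computed in \Cref{cor: l_a times b q}, using \Cref{prop: e_Mq special case}. Let $M=k[T_1,T_2]/(T_1^a,T_2^b)$ with $T=T_1+T_2$, so that $\gamma_M=\delta_a\delta_b$.

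First, check that $T^qM=0$. Since $q$ is a power of $p$, we have $T^q=(T_1+T_2)^q=T_1^q+T_2^q$ in characteristic $p$. Because $a,b\leq q$, both $T_1^q$ and $T_2^q$ vanish on $M$. The degenerate cases $a=0$ or $b=0$ give $M=0$, and then everything is zero and consistent with the formula.

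Next, apply \Cref{prop: e_Mq special case}:
\[
e_{\delta_a\delta_b,q}=l(M)-l_{\delta_a\delta_b}(q-1).
\]
Here $l(M)=ab$; alternatively, this is the value $l_{\delta_a\delta_b}(c)=ab$ for $c\geq q$ from \Cref{cor: l_a times b q}. Substituting the value of $l_{\delta_a\delta_b}(q-1)$ from \Cref{cor: l_a times b q} gives the two cases. If $a+b\leq q$, then $l_{\delta_a\delta_b}(q-1)=ab$, so $e_{\delta_a\delta_b,q}=ab-ab=0$. If $a+b\geq q$, then $l_{\delta_a\delta_b}(q-1)=ab-a-b+q$, so $e_{\delta_a\delta_b,q}=ab-(ab-a-b+q)=a+b-q$. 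At the overlap $a+b=q$ both formulas give $0$, so they agree.

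There is no real obstacle. The only point needing care is to justify $T^qM=0$ through the Frobenius identity $(T_1+T_2)^q=T_1^q+T_2^q$, since this is exactly the hypothesis of \Cref{prop: e_Mq special case}. As an alternative, one can use \Cref{prop: delta_a times b}(3) with $c=q$, namely $B(a,b,q)=2D(a,b,q)-D(a,b,q+1)-D(a,b,q-1)$, together with $D(a,b,q)=D(a,b,q+1)=ab$.
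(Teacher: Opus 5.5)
Your proposal is correct and is essentially the paper's argument. The paper plugs the values $l_{\delta_a\delta_b}(q)=l_{\delta_a\delta_b}(q+1)=ab$ and $l_{\delta_a\delta_b}(q-1)$ from the preceding corollary into the second-difference formula $e_{\delta_a\delta_b,q}=2l(q)-l(q+1)-l(q-1)$. Your route through $e_{M,q}=l(M)-l_M(q-1)$, with $T^qM=0$ justified by $(T_1+T_2)^q=T_1^q+T_2^q$, is that same computation packaged slightly differently, and you note the paper's direct route yourself as an alternative.
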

\begin{proof}
Since $e_{\delta_a\delta_b,q}=2l_{\delta_a\delta_b}(q)-l_{\delta_a\delta_b}(q+1)-l_{\delta_a\delta_b}(q-1)$, this is true by \Cref{cor: l_a times b q}.   
\end{proof}
\begin{Cor}
Let $q$ be a power of $p$, $M,N$ be two $k$-objects such that $T^qM=0$, $T^qN=0$. Then $e_{M \otimes N,q}=\sum_{1 \leq a,b \leq q,a+b\geq q}e_{M,a}e_{N,b}(a+b-q)$.    
\end{Cor}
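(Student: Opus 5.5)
Since $T^qM=0$ and $T^qN=0$, the $k$-objects $M$ and $N$ split as direct sums of cyclic pieces $k[T]/(T^a)$ with $1\leq a\leq q$. Hence $e_{M,a}=0$ for $a>q$, and $\gamma_M=\sum_{1\leq a\leq q}e_{M,a}\delta_a$; the same holds for $N$. The plan is to expand the product $\gamma_M\gamma_N$ bilinearly and read off the coefficient of $\delta_q$.

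First, \Cref{prop: e_a times b} (1) gives
\[
e_{M\otimes N,q}=\sum_{a,b\geq 1}e_{M,a}e_{N,b}B(a,b,q)=\sum_{1\leq a,b\leq q}e_{M,a}e_{N,b}\,e_{\delta_a\delta_b,q},
\]
where the range of summation is restricted because $e_{M,a}e_{N,b}=0$ unless $a,b\leq q$. Every pair $(a,b)$ that survives satisfies $0\leq a,b\leq q$, so \Cref{cor: e_a times b q} applies to each term. That corollary gives $e_{\delta_a\delta_b,q}=0$ when $a+b\leq q$ and $e_{\delta_a\delta_b,q}=a+b-q$ when $a+b\geq q$.

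The two cases agree at the boundary $a+b=q$, where both give $0$. Therefore the sum reduces to
\[
\sum_{1\leq a,b\leq q,\ a+b\geq q}e_{M,a}e_{N,b}(a+b-q),
\]
which is exactly the claimed formula.

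No step here is a real obstacle. The only point needing care is justifying the restriction $a,b\leq q$ from $T^qM=T^qN=0$, because that restriction is what puts every term in the range where \Cref{cor: e_a times b q} is valid.
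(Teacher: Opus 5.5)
Your proof is correct and follows the same route as the paper, whose proof is simply ``By \Cref{prop: e_a times b} and \Cref{cor: e_a times b q}.'' You spell out the steps that citation leaves implicit: the restriction to $1\leq a,b\leq q$ forced by $T^qM=T^qN=0$, which is what makes \Cref{cor: e_a times b q} applicable to every term, and the agreement of its two cases at $a+b=q$.
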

\begin{proof}
By \Cref{prop: e_a times b} and \Cref{cor: e_a times b q}.    
\end{proof}

\section{Bounds on F-signature of Determinantal Hypersurfaces} \label{sec: bound}

In this section, we will prove upper and lower bounds on the $F$-signature of determinantal hypersurfaces.

\begin{thm} \label{thm: main}
    Let $n \geq 2$, and let $X_{n \times n} = (x_{i,j})$ denote a generic matrix. Then, for any prime $p$, we have
    \begin{enumerate}
        \item $\s(\FF_p [X_n]/(\det(X_n))) \geq \s(\FF_p [X_{n+1}]/(\det(X_{n+1}))).$

        \item $\frac{(n!)^2}{(2n-1)!} \leq \s(\FF_p [X_n]/(\det(X_n))) \leq \frac{2}{3}.$
    \end{enumerate}
\end{thm}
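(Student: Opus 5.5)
Both parts reduce to the hypersurface formula \eqref{eq:prelim-hypersurface}: for $f\in S=\FF_p[x_{ij}]$ (localized at the homogeneous maximal ideal $\fm$), $a_q(S/(f))=l_S(S/(\fm^{[q]}:f^{q-1}))$, and $\s(S/(f))=\lim_q a_q/q^{t-1}$ with $t=\dim S$. Equivalently, $a_q(S/(f))$ counts (up to the socle duality on $S/\fm^{[q]}$) the monomials $\mu$ with $\mu f^{q-1}\notin\fm^{[q]}$. My plan is to compare these counts between $\det X_{n+1}$ and $\det X_n$ for (1), and between $\det X_n$ and a binomial specialization for the lower bound in (2).

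For (1), I write $\det X_{n+1}$ by specializing. Setting $x_{n+1,n+1}=1$ and killing the other entries of the last row and column sends $\det X_{n+1}$ to $\det X_n$. Intrinsically, $\det X_{n+1}$ localized at $x_{n+1,n+1}$ becomes, after row/column operations (a change of variables), $x_{n+1,n+1}\cdot\det(X_n')$ with $X_n'$ a generic $n\times n$ matrix in new variables. So $R_{n+1}[x_{n+1,n+1}^{-1}]\isom R_n[\text{extra variables}][x^{-1}]$. The $F$-signature is lower semicontinuous on $\operatorname{Spec}$ (Polstra), and polynomial extensions and localizations preserve $F$-signature at the corresponding primes. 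The generic-point/vertex relation then gives $\s(R_{n+1})$ at the maximal ideal $\leq$ the $F$-signature at the prime $P$ where only $x_{n+1,n+1}$ is inverted. That value equals $\s$ of $R_n\otimes$ a polynomial ring localized at a non-maximal prime, which is $\geq\s(R_n)$, and I want equality. I will take $P$ to be the prime corresponding to the origin of $R_n$ times the generic point of the extra affine coordinates. Its localization is $(R_n)_{\fm}[y]_{(\fm)}$, a flat extension with regular closed fiber, whose $F$-signature equals $\s(R_n)$. The residue field is then not perfect; the normalization $[k:k^q]$ in \eqref{eq:prelim-fsignature} handles this. Since the $F$-signature only increases under localization (Polstra, or via $a_q$ computations), $\s(R_{n+1})\leq \s((R_{n+1})_P)=\s(R_n)$.

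For the upper bound $2/3$ in (2), (1) reduces everything to $n=2$. There $R_{2,p}=\FF_p[x,y,z,w]/(xw-yz)$ is the Segre product of two polynomial rings in two variables, i.e.\ the affine toric ring of the cone over a square. I apply von Korff's formula: the region $\{u:0\le\langle u,v_i\rangle<1\}$ for the four rays has volume $2/3$ (matching Singh's computation). Alternatively, \eqref{eq:prelim-hypersurface} gives $a_q=q^4-l(S/(\fm^{[q]},(xw-yz)^{q-1}))$, and the Han--Monsky table of \Cref{cor: e_a times b q} lets one compute this directly.

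For the lower bound, I use a toric degeneration. Under a suitable term order (or weight), the initial form of $\det X_n$ is the binomial $x_{11}x_{22}\cdots x_{nn}-x_{12}x_{23}\cdots x_{n1}$, the two permutations being the identity and an $n$-cycle. Since \emph{only} terms with disjoint support remain, a weight vector $w$ exists with $\initial_w(\det X_n)$ equal to this binomial. Setting the other variables aside, $S/(\initial_w f)\isom B_{n,p}[\text{remaining } n^2-2n \text{ variables}]$, which has the same $F$-signature as $B_{n,p}$. Next I need semicontinuity in flat families: $l(S/(\fm^{[q]}:f^{q-1}))\geq l(S/(\fm^{[q]}:(\initial_w f)^{q-1}))$. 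I expect this to be the main obstacle. My approach is to use the second form of \eqref{eq:prelim-hypersurface}. The ideal $\fm^{[q]}$ is monomial, so $\initial_w(\fm^{[q]}+(f^{q-1}))\supseteq \fm^{[q]}+((\initial_w f)^{q-1})$, since $\initial_w(f^{q-1})=(\initial_w f)^{q-1}$. Because initial ideals preserve colength, $l(S/(\fm^{[q]}+(f^{q-1})))\leq l(S/(\fm^{[q]}+(\initial_w f)^{q-1}))$, hence $a_q(R_n)\geq a_q(\text{degeneration})$. Dividing by $q^{n^2-1}$ and using the theorem on $B_{n,p}$ (Theorem~\ref{thm:binomial}, proved later) gives $\s(R_{n,p})\geq (n!)^2/(2n-1)!$.
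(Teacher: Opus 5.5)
Your proof is correct. For part (1) and the upper bound it follows the paper: localize at a corner entry, use Bruns--Vetter, Polstra's semicontinuity, and invariance of $F$-signature under flat extensions with regular closed fiber. You are more careful than the paper in naming the prime being compared. It contracts to $I_2(X_{n+1})R_{n+1}$, which is homogeneous and so lies under the homogeneous maximal ideal. You also get $s(R_{2,p})=2/3$ from von Korff's volume formula, whereas the paper reuses its binomial computation at $n=2$.

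The lower bound takes a genuinely different route. The paper kills the $n^2-2n$ entries off the diagonal, the superdiagonal and the $(n,1)$ position, and checks they form a regular sequence (Cohen--Macaulayness plus a dimension count). It then invokes Taylor's inversion of adjunction to conclude that the $F$-signature cannot increase. You instead use a weight degeneration. The containment $\operatorname{in}_w(\mathfrak m^{[q]}+(f^{q-1}))\supseteq\mathfrak m^{[q]}+((\operatorname{in}_w f)^{q-1})$, together with equality of colengths for homogeneous ideals, gives $a_q(R_{n,p})\geq q^{n^2-2n}a_q(B_{n,p})$ for every $q$. What each route buys:
\begin{itemize}
\item Your route is elementary and finite-level: it needs only the hypersurface colon formula and flatness of Gr\"obner degenerations, and it realizes literally the ``toric degeneration'' promised in the abstract.
\item The paper's route relies on a deeper theorem, but one that is not tied to hypersurfaces or initial forms.
\end{itemize}

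One step needs to be made explicit: the existence of $w$. The phrase ``only terms with disjoint support remain'' is not an argument. Take $w=0$ on $x_{ii}$, $x_{i,i+1}$, $x_{n1}$ and $w=1$ on all other entries, with $\operatorname{in}_w$ the lowest-weight part. A permutation with $\pi(i)\in\{i,i+1\}$ for all $i$ (indices mod $n$) is either the identity or the $n$-cycle. Hence $\operatorname{in}_w(\det X_n)=x_{11}\cdots x_{nn}+(-1)^{n-1}x_{12}\cdots x_{n-1,n}x_{n1}$, which is the equation of $B_{n,p}$ after changing the sign of one variable. The weight-one variables are exactly those the paper kills, so both arguments compare against the same ring.
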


\begin{proof}
    First we prove part (1): Recall that inverting the variable $x_{1,1}$ in $\FF_p [X_{n+1}]/(\det(X_{n+1}))$, we obtain the isomorphism
    \[ \FF_p [X_{n+1}]/(\det(X_{n+1}))[x_{1,1}^{-1}] \isom \FF_p [t, t^{-1}][X_{n}]/(\det(X_n)),   \]
    for a new variable $t$.
    See \cite[Proposition 2.4]{BrunsVetterDeterminantalRings} for a proof of this isomorphism. By \cite[Theorem~5.7]{PolstraUniformBoundsandSemicontinuity}, we see that $\s(\FF_p [X_{n+1}]/(\det(X_{n+1})) \leq \s(k[t, t^{-1} ][X_{n}])/(\det(X_n)) $ , where in both cases, we compute the $F$-signature at the homogeneous maximal ideals. Furthermore, since the map $\FF_p[X_{n}]/(\det(X_n)) \to \FF_p [t, t^{-1}][X_n]/( \det(X_n))$ is flat with regular fibers, we have $ \s( \FF_p[X_{n}]/(\det(X_n))) = \s( \FF_p [t, t^{-1}][X_n]/( \det(X_n)))$
    by \cite[Theorem~5.6]{YaoObservationsAboutTheFSignature}. In summary, we have shown that $ \s(\FF_p [X_n]/(\det(X_n))) \geq \s(\FF_p [X_{n+1}]/(\det(X_{n+1}))) $ as required.

    For part (2), we set $g_n  = x_{1}  \cdot \dots \cdot x_{n} + y_1 \cdot \dots \cdot y_n$. Then, we first note that when $ n =2$, the determinant is just $x_{1,1}x_{2,2}  - x_{1,2}x_{2,1}$, which after a linear change of coordinates $x_{1,2} \mapsto - x_{1,2}$ isomorphic to $g_2$. Therefore, in this case, it is sufficient to show that the $F$-signature of $g_2$ is equal to $2/3$. This is done in the calculation in \Cref{sec: toric F-sig}.
    
    When $n \geq 3$, the upper bound follows from part (1) and the $n =2$ case. For the lower bound, we note that the variables in the set $\{x_{i,j} \, | \, 1 \leq i,j \leq n \} \setminus \{x_{i,i}, x_{j, j+1}, x_{n,1} \, | \, 1 \leq i \leq n, 1 \leq j \leq n -1\} $ form a regular sequence with a quotient isomorphic to
    \[ S' = \FF_p [x_{i,i}, x_{j, j+1}, x_{n,1}]/(x_{1,1} \cdot \dots \cdot x_{n,n} + (-1) ^{n-1} x_{1,2} x_{2,3} \cdot \dots \cdot x_{n-1, n} x_{n,1}). \]
    To see this, since $\FF_p [X_{n}]/(\det(X_n))$ is a Cohen-Macaulay ring of dimension $n^2 -1$, it is sufficient to check that dimension of the quotient ring by the remaining $n^2 - 2n$ variables is exactly $2n -1$. It is easy to see from the formula for the determinant that the quotient ring is isomorphic to $S'$ which is $2n -1$ dimensional.

    So, we have show that the ring $S'$ is a quotient of $\FF_p [X_n]/(\det (X_n))$ by a regular sequence. Furthermore, note that after a linear change of cooridnates, $S'$ is isomorphic to the ring $\FF_p [x_{1}, \dots, x_{n}, y_{1} \dots, y_{n}]/(g_n)$. By the calculation in \Cref{sec: toric F-sig}, we see that the $F$-signature of $S'$ is equal to $\frac{(n!)^2}{(2n-1)!}.$ Therefore it is enough to observe that the $F$-signature drops when we quotient by a regular sequence. This is proved in \cite[Corollary 1.2]{taylor2020inversionadjunctionfsignature}. This completes the proof of the theorem, pending the calculation of the $F$-signature of $S'$.
\end{proof}
 \section{Length Computation based on Gr\"{o}bner Degeneration} \label{sec: gb}

Let $k$ be a field of characteristic $p>0$. Let $S = k[x_1,\dots,x_n,y_1,\dots,y_n]$. Let $m$ be the homogeneous maximal ideal of $R$, $f = x_1\dots x_n + y_1 \dots y_n$ and $I = m^{[p]}+(f^{p-1})$.

In this section, we will compute the length of $S/I$ by computing the length of $S/\initial_>I$ where $\initial_>I$ is the initial ideal of $I$ under a graded reverse lexicographical order generated by the standard dictionary order on the monomials. The purpose of this computation is to obtain an explicit length formula
for the $F$-signature calculation in Section~5. By the colon-ideal formula in Section~2, we have
\[
\ell\bigl(S/(\mathfrak m^{[p]}:f^{p-1})\bigr)
=
p^{2n}-\ell\bigl(S/(\mathfrak m^{[p]}+(f^{p-1}))\bigr)
=
p^{2n}-\ell(S/I).
\]
Thus, the relevant Frobenius splitting length is determined by
$\ell(S/I)$. A Gr\"obner basis for $I$ makes this length accessible:
since
\[
\ell(S/I)=\ell(S/\operatorname{in}_{>}(I)),
\]
it suffices to count the monomials outside the initial ideal.
We therefore construct an explicit Gr\"obner basis and describe the
corresponding standard monomials. The resulting count provides the
formula whose asymptotic behavior is analyzed in Section~5.

We will need the following lemmas for a Gr\"{o}bner basis of $I$. \Cref{lemma:charpIdentity} is a combinatorics fact that gives us a simple looking Gr\"{o}bner basis in \Cref{lemma:GB}. 

\begin{lem} \label{lemma:charpIdentity}
    For all $0 \leq j,v \leq p-1$, $k \geq 1$, and $j+k,v+k \leq p-1$, we have
    \[ \binom{p-1}{j} \binom{p-1}{v+k} \equiv \binom{p-1}{v} \binom{p-1}{j+k} (\bmod p) .\]
\end{lem}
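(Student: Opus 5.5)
The key fact is that $\binom{p-1}{i}\equiv(-1)^i \pmod p$ for every $0\le i\le p-1$. Once this is established, both sides of the congruence become explicit signs. The left side is $(-1)^{j}(-1)^{v+k}=(-1)^{j+v+k}$. The right side is $(-1)^{v}(-1)^{j+k}=(-1)^{j+v+k}$. They agree, so the congruence holds.

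The only place the hypotheses enter is in justifying that this fact applies to every binomial coefficient appearing. We need the indices $j$, $v$, $j+k$, $v+k$ to lie in $[0,p-1]$. This is exactly what the assumptions $0\le j,v\le p-1$, $k\ge1$, and $j+k,\,v+k\le p-1$ guarantee. Note that $k\ge1$ is not actually needed for the identity; it is there only to match the later application in \Cref{lemma:GB}.

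To prove the fact itself, I would argue in $\FF_p$:
\[
\binom{p-1}{i}=\prod_{m=1}^{i}\frac{p-m}{m}\equiv\prod_{m=1}^{i}\frac{-m}{m}=(-1)^i .
\]
Each denominator $m$ with $1\le m\le i\le p-1$ is a unit mod $p$, so the division is legitimate. As an alternative check, one can compare coefficients in $(1+x)^{p-1}=(1+x)^p/(1+x)=(1+x^p)/(1+x)$ in $\FF_p[[x]]$; this gives $\sum_{i=0}^{p-1}(-1)^ix^i$ modulo $x^p$.

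There is no real obstacle here. The only care needed is the range check on the indices; outside $[0,p-1]$ the binomial coefficients vanish or the sign pattern fails.
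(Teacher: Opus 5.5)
Your proof is correct, but it is organized differently from the paper's. The paper never evaluates the individual coefficients. It writes all four binomial coefficients in factorials, cancels $(p-1)!^2$, and rearranges the claim into
\[
\frac{(v+k)!}{v!}\cdot\frac{(p-1-j)!}{(p-1-j-k)!}\equiv\frac{(j+k)!}{j!}\cdot\frac{(p-1-v)!}{(p-1-v-k)!} \pmod p .
\]
It then uses $(p-1-j)(p-2-j)\cdots(p-j-k)\equiv(-1)^k(j+1)\cdots(j+k)$, and the analogous congruence with $v$, to see that both sides equal $(-1)^k\frac{(v+k)!}{v!}\frac{(j+k)!}{j!}$.

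The underlying input, $p-m\equiv -m$, is the same in both arguments. You apply it to each coefficient separately and get the closed form $\binom{p-1}{i}\equiv(-1)^i$, after which the lemma is a one-line sign check. The paper only compares ratios of coefficients, so it proves exactly the identity needed and nothing more.

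Your route is shorter and proves more. It shows each $\binom{p-1}{i}$ with $0\le i\le p-1$ is $\pm1$ in $\FF_p$, which immediately gives the later remark that these coefficients are units (used to read off $\initial_>(I)$ from \Cref{lemma:GB}). It also makes explicit that $f^{p-1}=\sum_{i=0}^{p-1}(-1)^i(x_1\cdots x_n)^{p-1-i}(y_1\cdots y_n)^i$.

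In both arguments the hypotheses on $j,v,k$ only keep all indices in $[0,p-1]$, so the relevant factorials are defined and invertible modulo $p$. Your remark that $k\ge1$ plays no role is accurate.
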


\begin{proof}
    Since $(p -1)!$ is invertible mod $p$, by expanding the binomial coefficients, it is sufficient to show that
    \[ j!(p -1 - j)! (v+k)!(p -1- v - k)! \equiv v! (p -1 - v)! (j+k)! (p -1 - j - k)! (\bmod p ). \]
    But we may rewrite this (since all terms are invertible modulo $p$) as
    \[ \frac{(v+k)!}{v!} \, \frac{(p -1 - j)!}{(p -1-j - k)!} \equiv \frac{(j+k)!}{j!} \, \frac{(p -1 -v)!}{(p -1- v - k)!} \, (\bmod p) . \]
    But note that $ \frac{(p -1 - j)!}{(p -1-j - k)!} = (p -1- j) (p -1 - j - 1) \dots (p - j - k) \equiv (-1)^k \frac{(j+k)!}{j!} (\bmod p)$. And similarly, we have $ \frac{(p -1 -v)!}{(p -1- v - k)!} \equiv (- 1) ^k \frac{(v+k)!}{v!} (\bmod p)$. Therefore, we see that both sides of the above equation are equal to $ (-1)^k \frac{(v+k)!}{v!} \frac{(j+k)!}{j!} $ as required.
\end{proof}

\begin{lem} \label{lemma:GB}
Using a graded reverse lexicographical order for the standard dictionary ordering of the variables, a Gr\"{o}bner basis of $I$ consists of the following elements:
\begin{align*}
   & \{x_1^p,\dots,x_n^p\}, \\
   & \{y_1^p,\dots,y_n^p\}, \textrm{ and } \\
   & \{x_i^j \sum_{k=j}^{p-1} \binom{p-1}{k}(x_1\dots x_n)^{p-1-k}(y_1\dots y_n)^k \; \vert \; 0 \leq j \leq p-1, 1 \leq i \leq n \} .
\end{align*}
\end{lem}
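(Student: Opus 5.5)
The plan is to verify two things: that each listed element lies in $I$, and that Buchberger's criterion holds for the listed set. Write $X=x_1\cdots x_n$ and $Y=y_1\cdots y_n$, and set
\[
g_{i,j}=x_i^j\sum_{k=j}^{p-1}\binom{p-1}{k}X^{p-1-k}Y^k .
\]
Note that $g_{i,0}=f^{p-1}$ for every $i$.

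\emph{Membership.} Expand $x_i^jf^{p-1}=\sum_{k=0}^{p-1}\binom{p-1}{k}x_i^jX^{p-1-k}Y^k$. For $k<j$, the exponent of $x_i$ in the $k$-th term is $p-1-k+j\geq p$, so that term lies in $\fm^{[p]}$. Hence $g_{i,j}\equiv x_i^jf^{p-1}\pmod{\fm^{[p]}}$, and so $g_{i,j}\in I$. Since the generators $x_l^p$, $y_l^p$ and $f^{p-1}=g_{1,0}$ of $I$ all appear in the list, the listed elements generate $I$.

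\emph{Leading terms.} All terms of $g_{i,j}$ are monomials of the same degree. In graded reverse lex with $x_1>\cdots>x_n>y_1>\cdots>y_n$, the leading term among them is the one with the smallest power of $y_n$, i.e.\ the smallest $k$. So
\[
\initial(g_{i,j})=x_i^jX^{p-1-j}Y^j ,
\]
whose coefficient $\binom{p-1}{j}$ is a unit mod $p$.

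\emph{S-pairs.} To handle the S-pairs, I would first simplify the coefficients. Lemma~\ref{lemma:charpIdentity} says the ratios $\binom{p-1}{k+1}/\binom{p-1}{k}$ do not depend on the starting index. Even more directly, $\binom{p-1}{k}\equiv(-1)^k \pmod p$. Thus every $g_{i,j}$ is, up to a unit, the monomial $x_i^jX^{p-1-j}Y^j$ times the tail polynomial
\[
h_j=\sum_{m=0}^{p-1-j}(-1)^m(Y/X)^m ,
\]
read as a polynomial after clearing denominators against $X^{p-1-j}$. Pairs involving a monomial generator $x_l^p$ or $y_l^p$ are handled by the standard observation that the S-polynomial of a monomial $u$ and a polynomial $g$ equals a monomial multiple of $g-\initial(g)$. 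Every term of that tail is either divisible by some $x_l^p$ or $y_l^p$, or is a multiple of a leading term of some $g_{i',j'}$ with larger $j'$. In the latter case we continue reducing by descending induction on $j'$. For two polynomial generators $g_{i,j}$ and $g_{i',j'}$, form $S=M\,g_{i,j}-M'g_{i',j'}$, where $M,M'$ are the monomials matching the leading terms. Because both tails are alternating geometric sums in $Y/X$ with the same ratio, the terms of $Mg_{i,j}$ and $M'g_{i',j'}$ agree termwise as long as both sums are defined. The difference $S$ therefore consists only of the extra terms from the longer sum, together with terms whose $x$-exponents reach $p$. Each surviving term should be a monomial multiple of some $g_{i'',j''}$ with $j''>\max(j,j')$, or lie in $\fm^{[p]}$, so $S$ reduces to $0$.

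I expect the main obstacle to be this last bookkeeping step: checking that the leftover terms really assemble into whole multiples of later $g_{i'',j''}$, rather than only partial sums of them. If that gets messy, I would fall back on a length comparison. I would count the monomials not divisible by any $x_l^p$, $y_l^p$, or $x_i^jX^{p-1-j}Y^j$, and show that this number equals $\ell(S/I)$, computed independently via the Han--Monsky formulas of Section~2. Since the listed leading terms already generate an ideal contained in $\initial(I)$, equality of these colengths forces the listed set to be a Gr\"obner basis.
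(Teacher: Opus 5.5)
Your overall route is the paper's: Buchberger's criterion over the same families of S-pairs. Two of your changes are genuine simplifications: membership via $g_{i,j}\equiv x_i^jf^{p-1}\pmod{\fm^{[p]}}$, and using $\binom{p-1}{k}\equiv(-1)^k$ in place of Lemma~\ref{lemma:charpIdentity}.

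The gap is the step that carries the content of the lemma, namely that $S(g_{i,j},g_{u,v})$ reduces to $0$. You only assert it (``should be''), and the remainder you predict is not what occurs. You expect terms with an $x$-exponent reaching $p$, or multiples of later $g_{i'',j''}$; this is what leads to your worry about ``partial sums''. Do the index shift. For $j\le v$ and $i\neq u$ the matching multipliers are $M=x_u^jY^{v-j}$ and $M'=x_i^v\prod_{m\neq i,u}x_m^{v-j}$, and
\[
(-1)^{v-j}Mg_{i,j}-M'g_{u,v}=x_i^jx_u^j\sum_{k=p}^{p-1+v-j}(-1)^kX^{p-1+v-j-k}Y^k .
\]
When $u=i$, take $M=Y^{v-j}$ and $M'=\prod_{m\neq i}x_m^{v-j}$, and drop the factor $x_u^j$. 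So the difference is exactly the overhang of the longer geometric sum. Every surviving term has $Y$-exponent at least $p$, hence is divisible by $y_1^p$. No $x$-exponent ever reaches $p$, and no further $g$'s are involved. With this written out, your argument becomes the paper's.

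Your treatment of the monomial S-pairs also needs correcting. For $l\neq i$ one has $S(x_l^p,g_{i,j})=x_i^jg_{l,j+1}$ exactly. Its non-leading terms $x_i^jx_l^{j+1}X^{p-1-k}Y^k$ ($k\geq j+2$) have all $x$-exponents at most $p-2$, and each listed leading term has some $x$-exponent equal to $p-1$. So these terms are divisible by no listed leading term, and your ``every term'' claim is false. The correct statement is not termwise: the S-polynomial is a monomial multiple of $g_{l,j+1}$ (it equals $g_{i,j+1}$ when $l=i$), so one reduction step kills it.

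Your fallback is logically sound and gives a genuinely different proof that avoids S-pairs entirely. At $q=p$ and $1\le a\le p$,
\[
\sum_b e_{N,b}\max\bigl(b-(p-a),0\bigr)=p^n-l_N(p-a)=a^n ,
\]
so the Han--Monsky formula gives $a_p=\sum_{a=1}^{p}e_{M,a}a^n=p^n+\sum_{k=1}^{p-1}P(k)(p-k)^n$. This is exactly the count in Lemma~\ref{lengthcomputation}, which uses only the ideal of listed leading terms. Hence that ideal and the initial ideal of $I$ have the same colength, which forces equality. The cost is that the lemma then rests on Han's formulas for $D(a,b,c)$. Those already compute $a_q$ for every $q$, so the Gr\"obner basis would no longer be needed for the $F$-signature.
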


 \begin{proof}

We denote the union of the sets above to be the set $G$ and we will use the following notation in this proof:

$$f_{i,j} = x_i^j \sum_{k=j}^{p-1} \binom{p-1}{k}(x_1\dots x_n)^{p-1-k}(y_1\dots y_n)^k. $$

Observe that $f_{i,0} = f^{p-1}$ for any $i$.

In order to show that the set $G$ is the Gr\"{o}bner basis, we will use Buchberger's Algorithm. We will use the following notation: $$S(f,g) = \frac{\initial_>(f)}{\gcd(\initial_>(f),\initial_>(g))}g - \frac{\initial_>(f)}{\gcd(\initial_>(g),\initial_>(g))}f$$ to represent the S-pairs. In this notation, $\initial_>(f)$ includes the lead coefficient of $f$.

We begin with the observation that $S(f_{i,j},x_i^p) = f_{i,j+1}$, thus $G$ is a generating set of $I$. Now, we will demonstrate that the rest of the S-pairs reduce to zero. 

Observe that $S(x_i^p,y_j^p) = 0$ and, for $k \neq i$, $S(f_{i,j},x_k^p) = x_i^j f_{k,j+1}$, thus both S-pairs reduce to zero. 

The S-pair $S(f_{i,j},y_k^p)$ is equal to $$y_k^{p-j}x_i^j \sum_{k=j+1}^{p-1} \binom{p-1}{k}(x_1\dots x_n)^{p-1-k}(y_1\dots y_n)^k,$$ and as every term in the polynomial is divisible by $y_k^p$, the S-pair reduces to zero. 

Now we will examine the S-pair $S(f_{i,j},f_{u,v})$, where $i \neq u$. Without loss of generality, we may assume that $j \leq v$. Then we have that 

\begin{align*}
    S(f_{i,j},f_{u,v}) = & \binom{p-1}{j}x_i^jx_u^j(x_1\dots x_n)^{v-j}\sum_{k=v+1}^{p-1} \binom{p-1}{k}(x_1\dots x_n)^{p-1-k}(y_1\dots y_n)^k \\
    & - \binom{p-1}{v}x_i^jx_u^j(y_1\dots y_n)^{v-j}\sum_{k=j+1}^{p-1} \binom{p-1}{k}(x_1\dots x_n)^{p-1-k}(y_1\dots y_n)^k\\
    =& - \binom{p-1}{v}x_i^jx_u^j(y_1\dots y_n)^{v-j}\sum_{k=p-(v-j)}^{p-1} \binom{p-1}{k}(x_1\dots x_n)^{p-1-k}(y_1\dots y_n)^k.
\end{align*}

The second equality follows from \Cref{lemma:charpIdentity}. Observe that every remaining term is divisible by $y_i^p$, thus the S-pair reduces to zero.

The proof $S(f_{i,j},f_{i,v})$ is identical, except one replaces $x_i^jx_u^j$ with $x_i^j$. Thus, we have shown all $S$-polynomials reduce to zero, and so $G$ is a Gr\"{o}bner basis. 
\end{proof}

Therefore, since $\binom{p-1}{k}$ is a unit in $k$ for $0 \leq k \leq p-1$, the initial ideal of $I$, is generated by the following set:
\[
    \{x_1^p,\dots,x_n^p,
    y_1^p,\dots,y_n^p, x_i^j (x_1\dots x_n)^{p-1-j}(y_1\dots y_n)^j \; \vert \; 0 \leq j \leq p-1, 1 \leq i \leq n \} .
\]


The following lemma computes the length of $S/I$. 

\begin{lem} \label{lengthcomputation}
The length of $S/I$ is 
\[ \ell(S/I) = p^{2n}-\Big(p^n+\sum_{k=1}^{p-1}\big((k+1)^n-2k^n+(k-1)^n\big)(p-k)^n\Big).\]
\end{lem}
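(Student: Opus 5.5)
Since $\ell(S/I)=\ell(S/\initial_>I)$, we count standard monomials. Equivalently, the claim says that the number of monomials $x^ay^b$ with all exponents in $[0,p-1]$ that lie in $\initial_>I$ equals
\[
p^n+\sum_{k=1}^{p-1}\big((k+1)^n-2k^n+(k-1)^n\big)(p-k)^n .
\]
So we work inside the box $[0,p-1]^{2n}$ and count the monomials divisible by some generator $x_i^j(x_1\cdots x_n)^{p-1-j}(y_1\cdots y_n)^j$. Such a generator requires $x_i$-exponent $\ge p-1$, the other $x$-exponents $\ge p-1-j$, and all $y$-exponents $\ge j$. (For $j=0$ every $x_l$-exponent must be $p-1$.)

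We fix $b=(b_1,\dots,b_n)$ and set $m=\min_l b_l$. The monomial $x^ay^b$ lies in $\initial_>I$ iff there exist $i$ and $j\le m$ with $a_i\ge p-1$, where we use $a_i\le p-1$, so $a_i=p-1$, and $a_l\ge p-1-j$ for all $l$. It is optimal to take $j=m$. Hence the condition is: every $a_l\ge p-1-m$ and some $a_l=p-1$.

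The $a$-count is therefore $(m+1)^n-m^n$: all $a_l$ lie in $[p-1-m,p-1]$, minus those with none equal to $p-1$.

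The number of $b\in[0,p-1]^n$ with $\min b=m$ is $(p-m)^n-(p-m-1)^n$. The total is
\[
\sum_{m=0}^{p-1}\big((m+1)^n-m^n\big)\big((p-m)^n-(p-m-1)^n\big).
\]

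The remaining step is Abel summation. Set $A_m=(m+1)^n-m^n$ and $C_m=(p-m)^n$. Then the total is $\sum_m A_m(C_m-C_{m+1})$, with $C_p=0$. This equals $A_0C_0+\sum_{m=1}^{p-1}(A_m-A_{m-1})C_m$. Here $A_0C_0=p^n$ and $A_m-A_{m-1}=(m+1)^n-2m^n+(m-1)^n$, which matches the stated formula with $k=m$. Subtracting from $p^{2n}$ gives $\ell(S/I)$.

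The main point to get right is the characterization of membership, namely the reduction to $j=m$ and the observation that $x_i^{p-1+\cdots}$ forces $a_i=p-1$ exactly. I will double-check the boundary cases $j=0$ (the monomial $(x_1\cdots x_n)^{p-1}$) and $m=p-1$ against the formula.
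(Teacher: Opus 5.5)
Your proof is correct, and its core matches the paper's: after passing to $\operatorname{in}_>(I)$ and restricting to the box $[0,p-1]^{2n}$, both arguments use the same membership criterion. In the paper's deficit coordinates ($x$-exponents $p-1-\alpha_l$, $y$-exponents $p-1-\beta_l$) it reads ``some $\alpha_l=0$ and $\max\alpha\le p-1-\max\beta$.'' Your choice $j=m$ simply takes the top of the interval $[\max\alpha,\,m]$ of admissible $j$.

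Where you differ is in how the count is sliced. The paper conditions on the $x$-side, $k=\max\alpha$. By inclusion--exclusion, the $x$-parts with minimal deficit $0$ and maximal deficit $k$ number $(k+1)^n-2k^n+(k-1)^n$, with $k=0$ treated separately. The $y$-part is then a full box of size $(p-k)^n$, so the stated formula appears with no further manipulation.

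You condition on the $y$-side, $m=\min b$. This gives the product of first differences $\sum_{m=0}^{p-1}\bigl((m+1)^n-m^n\bigr)\bigl((p-m)^n-(p-m-1)^n\bigr)$ and costs you one summation by parts. You carry it out correctly ($C_p=0$, $A_0C_0=p^n$), and the boundary cases you wanted to recheck are already fine, since $0^n=0$.

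What your form buys is symmetry. Fed into Section~5, it gives directly $n^2\int_0^1 t^{n-1}(1-t)^{n-1}\,dt=(n!)^2/(2n-1)!$, which makes the factor $(n!)^2$ transparent. The paper's second-difference form instead leads to the asymmetric Beta integral $n(n-1)\int_0^1 x^{n-2}(1-x)^n\,dx$.
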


\begin{proof}

Observe that the length of $S/I$ is the same as the length of $S/\initial_>(I)$. We will count the number of monomials that do not show up in $\initial_>(I)$.

Firstly, observe that any monomial where the degree of $x_i$ or $y_i$ for some $i$ is at least $p$ is already in $\initial_>(I)$. Thus, all monomials not in $\initial_>(I)$ have the form $\prod_{1 \leq i \leq n} x_i^{p-1-a_i} \prod_{1 \leq j \leq n} y_j^{p-1-b_j}$, where $0 \leq a_i,b_i \leq p-1$. There are $p^{2n}$ such monomials. 

Now, we begin to count the monomials of this form that are in $\initial_>(I)$. Observe that a monomial of the form $\prod_{1 \leq i \leq n} x_i^{p-1-a_i} \prod_{1 \leq j \leq n} y_j^{p-1-b_j}$ is in $\initial_>(I)$ if and only if the following two requirements are satisfied:
\begin{enumerate}
    \item There exists an $i$ such that $a_i = 0$,
    \item $\max\{b_1,\dots,b_n\} \leq p-1-\max\{a_1,\dots,a_n\}$.
\end{enumerate}

Based on these conditions, one may count based on the maximum of $\{a_1,\dots,a_n\}$. We begin with the case of $\max\{a_1,\dots,a_n\} = 0$. Here, we only have one choice for each $a_i$, and thus only one choice for $\prod_{1 \leq i \leq n} x_i^{p-1-a_i}$. However, in this case, we have $p$ choices for each $b_j$, thus there are $p^n$ choices for $\prod_{1 \leq j \leq n} y_j^{p-1-b_j}$, and thus $p^n$ monomials where $\max\{a_i\} = 0$.

Now, we may count what happens where $\max\{a_1,\dots,a_n\} = k$ for some $1\leq k \leq p-1$. First let's count the number of choices for $\prod_{1 \leq i \leq n} x_i^{p-1-a_i}$.

In this scenario, we must make sure to account for the fact that at least one $a_i$ must equal zero, and at least one $a_i$ must equal $k$. Observe that we have $(k+1)$ choices for each $a_i$, thus resulting in $(k+1)^n$ combinations where $\max\{a_1,\dots,a_n\} \leq k$. There are $k^n$ combinations where none of the $a_i = 0$, $k^n$ combinations where none of the $a_i = k$, and $(k-1)^n$ combinations where none of the $a_i$ are 0 or $k$. Thus, the number of choices that we have for $\prod_{1 \leq i \leq n} x_i^{p-1-a_i}$ is equal to $(k+1)^n-2k^n+(k-1)^n$.

Now we begin the count of the number of choices for $\prod_{1 \leq j \leq n} y_j^{p-1-b_j}$. Observe that all we need to do here is ensure that $\max\{b_1,\dots,b_n\} \leq p-1-k$, so we have $(p-k)$ choices for each $b_j$, and thus $(p-k)^n$ choices for $\prod_{1 \leq j \leq n} y_j^{p-1-b_j}$.

Thus, in the case where $\max\{a_1,\dots,a_n\} = k$ for $1 \leq k \leq p-1$, we have $\big((k+1)^n-2k^n+(k-1)^n\big)(p-k)^n$ monomials of the form $\prod_{1 \leq i \leq n} x_i^{p-1-a_i} \prod_{1 \leq j \leq n} y_j^{p-1-b_j}$ that are in $\initial_>(I)$.

Putting it all together, we have \[p^{2n}-\Big(p^n+\sum_{k=1}^{p-1}\big((k+1)^n-2k^n+(k-1)^n\big)(p-k)^n\Big)\] monomials of the form $\prod_{1 \leq i \leq n} x_i^{p-1-a_i} \prod_{1 \leq j \leq n} y_j^{p-1-b_j}$ which are not in $\initial_>(I)$ and thus it equals the length of $S/I$. 
\end{proof}

\section{Calculation of the toric F-signature} \label{sec: toric F-sig}

In this section, we will compute the $F$-signature of the ring $S_p/(f)$. Here $S_p = \FF_p [x_1, \dots, x_n, y_1, \dots, y_n]$ and $f $ denotes the binomial   $ x_1 \cdot \dots \cdot x_n + y_1 \cdot \dots \cdot y_n.$ Recall that the $F$-signature is the limit
\[ \s(S_p/(f)) = \lim_{e \to \infty} \frac{\ell (S_p/I_{p,e})}{p^{(2n- 1)e}}  \]
where $I_{p.e} = (\fm^{[p^e]} : (f^{p^e -1 })).$ However, since $f$ is a binomial hypersurface, $S_p/(f)$ is isomorphic to a toric ring for each $p$, and hence the $F$-signature of $S_p/(f)$ is independent of $p$. So, for any prime $p_0$ we may write
\[ \s(S_{p_0} /(f)) = \lim_{p \to \infty} \s (S_p / (f)) . \]
The reason for doing this is that by a result of P\'erez, Tucker and Yao, we have that $\lim_{p \to \infty} \s(S_p /(f)) = \lim_{p \to \infty} \frac{\ell(S_p /I_{p,1})}{p^{2n-1}}.$ Combining this with the above observation, we conclude that for any fixed prime $p_0$, we have
\[  \s(S_{p_0} /(f))  = \lim_{p \to \infty} \frac{\ell(S_p /I_{p,1})}{p^{2n-1}}. \]
Therefore, it is sufficient to compute the lengths $ \ell(S/I_{p,1})$ for all $p \gg 0$. Furthermore, we have the exact sequence
\[ 0 \to S_p /(\fm^{[p]} : (f^{p-1})) \to S/\fm^{[p]} \to S/(\fm^{[p]} + (f^{p-1})) \to 0  \]
which gives us the equality
\[ \ell(S_p/I_{p,1}) = p^{2n} - \ell (S_p/(\fm^{[p]} + (f^{p-1}))  . \]
Combining this with \Cref{lengthcomputation}, we obtain that
\begin{equation} \ell(S_p/I_{p,1}) = \Big(p^n+\sum_{k=1}^{p-1}\big((k+1)^n-2k^n+(k-1)^n\big)(p-k)^n\Big)    \end{equation}

Therefore, for any fixed $p_0$, we have
\[ \s(S_{p_0}/(f)) = \lim_{p \to \infty}  \frac{\Big(p^n+\sum_{k=1}^{p-1}\big((k+1)^n-2k^n+(k-1)^n\big)(p-k)^n\Big) }{p^{2n-1}} \]
We note that
\[ (k+1) ^n + (k - 1) ^n - 2k^n = n (n -1) k^{n-2} + o(k^{n-2}).    \]
Therefore, 
\[\lim_{p \to \infty}  \frac{\Big(p^n+\sum_{k=1}^{p-1}\big((k+1)^n-2k^n+(k-1)^n\big)(p-k)^n\Big) }{p^{2n-1}} = n ( n-1) \int_{0} ^1   x^{n-2} ( 1- x) ^n dx.   \]
This integral is the well-known $\beta$-function and the value is equal to $ \int_{0} ^1   x^{n-2} ( 1- x) ^n dx = \frac{(n-2)! \, n!}{(2n -1)!}$, which gives us the formula
\[ \s(S_p/(f)) = \frac{(n!)^2}{(2n-1)!}.    \]

\section{Han-Monsky Machine} \label{sec: Han-Monsky}

Now we use the representation ring introduced in \Cref{sec:prelim} to compute the $F$-signature of the hypersurface ring $k[x_1,x_2,\ldots,x_n,y_1,y_2,\ldots,y_n]/(x_1x_2\ldots x_n-y_1y_2\ldots y_n)$. We always assume $n \geq 2$; otherwise the quotient ring is just a polynomial ring in one variable and its $F$-signature is clearly equal to $1$. We fix a $q$ which is a power of the characteristic $p$. Let $M$ be the $k$-object
$$M=k[x_1,\ldots,x_n]/(x_1^q,\ldots,x_n^q)$$
where $T$ acts via $x_1x_2\ldots x_n$. Let $N$ be the $k$-object
$$N=k[y_1,\ldots,y_n]/(y_1^q,\ldots,y_n^q)$$
where $T$ acts via $y_1y_2\ldots y_n$. We see $M\cong N$ as $k$-objects, and $T^qM=0$, $T^qN=0$.
\begin{Pn}
Let $a$ be an integer. Then
\begin{enumerate}
\item \begin{equation*}
l_{M}(a) = \left\{
        \begin{array}{ll}
            0 & \quad a \leq 0 \\
            q^n-(q-a)^n & \quad 0<a<q\\
            q^n & \quad a \geq q
        \end{array}
    \right.
    \end{equation*}
\item \begin{equation*}
e_{M,a} = \left\{
        \begin{array}{ll}
            (q-a-1)^n+(q-a+1)^n-2(q-a)^n & \quad 0<a<q\\
            1 & \quad a = q\\
            0 & \quad a>q
        \end{array}
    \right.
    \end{equation*}
\item Let $P(x)=(x+1)^n+(x-1)^n-2x^n$ where $n \geq 2$, then $P(x)$ is a polynomial of $x$ with leading term $n(n-1)x^{n-2}$, and for $0<a<q$, $e_{M,a}=P(q-a)$.
\end{enumerate}
\end{Pn}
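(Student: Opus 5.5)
Part (1) comes first, because (2) and (3) follow from it by the general formulas in \Cref{prop: e_M and l_M relation} and \Cref{prop: e_Mq special case}. For $a \leq 0$ there is nothing to prove, since $l_M(a)=0$ by definition. For $a\geq 1$ we have $l_M(a)=l(M/T^aM)$, and
\[
M/T^aM=k[x_1,\ldots,x_n]/\bigl(x_1^q,\ldots,x_n^q,(x_1\cdots x_n)^a\bigr)
\]
is a monomial quotient. Its length is therefore the number of exponent vectors $(c_1,\ldots,c_n)$ with $0\leq c_i\leq q-1$ that are not divisible by $(x_1\cdots x_n)^a$, i.e.\ for which some $c_i<a$. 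If $a\geq q$, every such vector qualifies, so $l_M(a)=q^n$. If $0<a<q$, we count the complement: the vectors with all $c_i\geq a$ number $(q-a)^n$. This gives $l_M(a)=q^n-(q-a)^n$. The only point needing care is that the formula also covers the boundary values, so that the piecewise expressions agree at $a=0$ and $a=q$.

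For part (2), apply \Cref{prop: e_M and l_M relation}(3): $e_{M,a}=2l_M(a)-l_M(a+1)-l_M(a-1)$ for $a\geq1$. The key observation is that the single expression $g(a)=q^n-(q-a)^n$ equals $l_M(a)$ for every $0\leq a\leq q$. Indeed, at $a=0$ it gives $0$ and at $a=q$ it gives $q^n$, matching part (1).

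\emph{Case $0<a<q$.} Here $a-1,a,a+1\in[0,q]$, so we may substitute $g$ for $l_M$ at all three points. The constant $q^n$ cancels, leaving
\[
e_{M,a}=(q-a-1)^n+(q-a+1)^n-2(q-a)^n .
\]

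\emph{Case $a=q$.} We have $l_M(q)=l_M(q+1)=q^n$ and $l_M(q-1)=q^n-1$, so $e_{M,q}=1$. Alternatively, \Cref{prop: e_Mq special case} gives the same value, since $T^qM=0$.

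\emph{Case $a>q$.} All three values of $l_M$ equal $q^n$, so $e_{M,a}=0$.

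Part (3) is a binomial expansion. Writing $x=q-a$,
\[
P(x)=(x+1)^n+(x-1)^n-2x^n=\sum_{j}\binom{n}{j}\bigl(1+(-1)^j\bigr)x^{n-j}-2x^n .
\]
The $j=0$ term contributes $2x^n$, which cancels the $-2x^n$, and the $j=1$ terms cancel each other. The first surviving term is $j=2$, namely $2\binom n2 x^{n-2}=n(n-1)x^{n-2}$. This coefficient is nonzero as an integer because $n\geq2$. Comparing with the formula in part (2) gives $e_{M,a}=P(q-a)$ for $0<a<q$.

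None of these steps is a real obstacle. The only place where attention is needed is the boundary bookkeeping at $a=1$ and $a=q-1$ in part (2), and that is exactly what the remark that $g$ extends $l_M$ on all of $[0,q]$ takes care of.
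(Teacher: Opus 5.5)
Your proposal is correct and follows essentially the paper's route. You get part (1) from the length of $k[x_1,\ldots,x_n]/(x_1^q,\ldots,x_n^q,(x_1\cdots x_n)^a)$, where you count monomials directly and the paper uses the colon ideal $(x_1^q,\ldots,x_n^q):(x_1\cdots x_n)^a=(x_1^{q-a},\ldots,x_n^{q-a})$, which is the same count. You then get (2) from the second-difference formula for $e_{M,a}$ and (3) by binomial expansion, and your explicit treatment of the boundary values $a=1$, $a=q-1$, $a=q$ supplies details the paper compresses into ``(2) is a consequence of (1).''
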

\begin{proof}
For part (1), it is clear when $a\leq 0$. When $0<a<q$, we have 
\begin{align*}
l(k[x_1,\ldots,x_n]/(x_1^q,\ldots,x_n^q,x_1^ax_2^a\ldots x_n^a))\\
=l(k[x_1,\ldots,x_n]/(x_1^q,\ldots,x_n^q))-l(k[x_1,\ldots,x_n]/(x_1^q,\ldots,x_n^q):x_1^ax_2^a\ldots x_n^a)\\
=l(k[x_1,\ldots,x_n]/(x_1^q,\ldots,x_n^q))-l(k[x_1,\ldots,x_n]/(x_1^{q-a},\ldots,x_n^{q-a}))\\
=q^n-(q-a)^n.
\end{align*}
When $a \geq q$, $l(k[x_1,\ldots,x_n]/(x_1^q,\ldots,x_n^q,x_1^ax_2^a\ldots x_n^a))=l(k[x_1,\ldots,x_n]/(x_1^q,\ldots,x_n^q))=q^n.$ So (1) is proved, and (2) is a consequence of (1). We have $e_{M,a}=P(q-a)$ by (2), and the rest of (3) is just computation.
\end{proof}
Since $M \cong N$ as $k$-objects, $e_{M,a}=e_{N,a}$ and $l_M(a)=l_N(a)$, so the above computation also gives $e_{N,a}$ and $l_N(a)$.
\begin{thm}
Let $a_q=l(\frac{k[x_1,x_2,\ldots,x_n,y_1,y_2,\ldots,y_n]}{(x_1^q,x_2^q,\ldots,x_n^q,y_1^q,y_2^q,\ldots,y_n^q):(x_1x_2\ldots x_n-y_1y_2\ldots y_n)^{q-1}})$. Then:
\begin{enumerate}
\item $a_q=e_{M \otimes N,q}=\sum_{1 \leq a,b \leq q,a+b\geq q}e_{M,a}e_{N,b}(a+b-q)$.
\item $\s(\frac{k[x_1,x_2,\ldots,x_n,y_1,y_2,\ldots,y_n]}{x_1x_2\ldots x_n-y_1y_2\ldots y_n})=\lim_{q \to \infty}a_q/q^{2n-1}=\frac{(n!)^2}{(2n-1)!}$.
\end{enumerate}
In particular, $\s(\frac{k[x_1,x_2,x_3,y_1,y_2,y_3]}{x_1x_2x_3-y_1y_2y_3})=3/10.$
\end{thm}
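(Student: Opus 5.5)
Write $S=k[x_1,\ldots,x_n,y_1,\ldots,y_n]$, $\mathfrak n$ for its homogeneous maximal ideal, and $f=x_1\cdots x_n-y_1\cdots y_n$. The plan is to identify the length $l(S/(\mathfrak n^{[q]}+(f^{q-1})))$ with a value of the function $l_{M\otimes N}$, and then turn the resulting sum into a Riemann sum.

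\emph{Step 1: identifying $a_q$ with a Han--Monsky invariant.} Since $S/\mathfrak n^{[q]}\cong M\otimes_k N$, we can let $T$ act on it through multiplication by $x_1\cdots x_n+y_1\cdots y_n$. This is exactly the tensor $k$-object structure $T(m\otimes n)=Tm\otimes n+m\otimes Tn$. To pass to $f=x_1\cdots x_n-y_1\cdots y_n$, apply the automorphism $y_1\mapsto -y_1$. It fixes $\mathfrak n^{[q]}$ and the colon lengths, and it commutes with the $k$-object structure of $N$ up to replacing $T$ by $-T$, which does not change the isomorphism class. Then
\[ l\bigl(S/(\mathfrak n^{[q]}+(f^{q-1}))\bigr)=l_{M\otimes N}(q-1). \]
By \eqref{eq:prelim-hypersurface},
\[ a_q=q^{2n}-l_{M\otimes N}(q-1). \]
We have $T^{2q-1}(M\otimes N)=0$, but not $T^q=0$, so \Cref{prop: e_Mq special case} does not apply directly. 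Instead I compute with \Cref{prop: e_a times b}(2) and \Cref{cor: l_a times b q}, using $e_{M,a}=0$ for $a>q$:
\[ q^{2n}-l_{M\otimes N}(q-1)=\sum_{a,b\le q} e_{M,a}e_{N,b}\bigl(ab-D(a,b,q-1)\bigr). \]
By the corollary, $ab-D(a,b,q-1)$ equals $0$ when $a+b\le q$ and equals $a+b-q$ when $a+b\ge q$. This gives exactly $\sum_{a+b\ge q}e_{M,a}e_{N,b}(a+b-q)$. By the preceding corollary on $e_{M\otimes N,q}$, this sum also equals $e_{M\otimes N,q}$, which proves (1).

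\emph{Step 2: the limit.} Substitute $e_{M,a}=P(q-a)$ for $a<q$ and $e_{M,q}=1$. Set $u=q-a$ and $v=q-b$, so that $a+b-q=q-u-v$ and the condition $a+b\ge q$ becomes $u+v\le q$.

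The terms with $a=q$ or $b=q$ contribute $O(q\cdot q^{n-1})=O(q^n)$, which is negligible against $q^{2n-1}$ since $n\ge 2$.

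The main term is $\sum_{u+v\le q}P(u)P(v)(q-u-v)$. Since $P(x)=n(n-1)x^{n-2}+O(x^{n-3})$, dividing by $q^{2n-1}$ turns this into a Riemann sum converging to
\[ n^2(n-1)^2\int_{x,y\ge0,\ x+y\le1}x^{n-2}y^{n-2}(1-x-y)\,dx\,dy. \]
This is a Dirichlet integral, equal to
\[ \frac{\Gamma(n-1)^2\,\Gamma(2)}{\Gamma(2n)}=\frac{((n-2)!)^2}{(2n-1)!}. \]
Multiplying by $n^2(n-1)^2$ gives $\frac{(n!)^2}{(2n-1)!}$. The identification of this limit with $\s$ follows from \eqref{eq:prelim-fsignature} together with \eqref{eq:prelim-hypersurface}, and the value does not depend on $k$.

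For the special case, take $n=3$: $36/120=3/10$.

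\emph{Main obstacle.} The delicate point is Step 1. One has to justify carefully that the tensor $k$-object structure matches multiplication by $f$ despite the sign, and that the $l$-computation is valid even though $T^q\ne0$ on $M\otimes N$. The error control in the Riemann sum is routine: the lower-order terms of $P$ contribute $O(q^{2n-2})$.
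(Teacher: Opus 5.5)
Your argument is correct and is essentially the paper's: the sign automorphism $y_1\mapsto -y_1$ identifies $a_q$ with $l(M\otimes N)-l_{M\otimes N}(q-1)$, the values of $D(a,b,q-1)$ give the sum formula, the terms with $a=q$ or $b=q$ are $O(q^n)$, and the rest is a Riemann sum. Your substitution $u=q-a$, $v=q-b$ followed by the Dirichlet integral is just a tidier way to evaluate the integral than the paper's iterated integration.

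One correction: your remark that $T^q\neq 0$ on $M\otimes N$ is false. In characteristic $p$,
\[
(x_1\cdots x_n+y_1\cdots y_n)^q=(x_1\cdots x_n)^q+(y_1\cdots y_n)^q\in\mathfrak n^{[q]},
\]
so $T^q=0$ on $M\otimes N$. Hence \Cref{prop: e_Mq special case} applies directly, as in the paper, and your detour, while valid, is unnecessary.
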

\begin{proof}
By definition of $F$-signature we see $\s(\frac{k[x_1,x_2,\ldots,x_n,y_1,y_2,\ldots,y_n]}{x_1x_2\ldots x_n-y_1y_2\ldots y_n})=\lim_{q \to \infty}a_q/q^{2n-1}$, so the last claim follows from (2). For (1), we have 
\begin{equation*}
\begin{split}
l(\frac{k[x_1,x_2,\ldots,x_n,y_1,y_2,\ldots,y_n]}{(x_1^q,x_2^q,\ldots,x_n^q,y_1^q,y_2^q,\ldots,y_n^q):(x_1x_2\ldots x_n-y_1y_2\ldots y_n)^{q-1}})\\
=l(\frac{k[x_1,x_2,\ldots,x_n,y_1,y_2,\ldots,y_n]}{(x_1^q,x_2^q,\ldots,x_n^q,y_1^q,y_2^q,\ldots,y_n^q)})\\
-l(\frac{k[x_1,x_2,\ldots,x_n,y_1,y_2,\ldots,y_n]}{(x_1^q,x_2^q,\ldots,x_n^q,y_1^q,y_2^q,\ldots,y_n^q,(x_1x_2\ldots x_n-y_1y_2\ldots y_n)^{q-1})})\\
\overset{(*)}{=}l(\frac{k[x_1,x_2,\ldots,x_n,y_1,y_2,\ldots,y_n]}{(x_1^q,x_2^q,\ldots,x_n^q,y_1^q,y_2^q,\ldots,y_n^q)})\\
-l(\frac{k[x_1,x_2,\ldots,x_n,y_1,y_2,\ldots,y_n]}{(x_1^q,x_2^q,\ldots,x_n^q,y_1^q,y_2^q,\ldots,y_n^q,(x_1x_2\ldots x_n+y_1y_2\ldots y_n)^{q-1})})\\
=l(M\otimes N)-l_{M\otimes N}(q-1)\\
=e_{M \otimes N,q}=\sum_{1 \leq a,b \leq q,a+b\geq q}e_{M,a}e_{N,b}(a+b-q).
\end{split}
\end{equation*}
Here (*) is true by applying the ring automorphism on $k[x_1,x_2,\ldots,x_n,y_1,y_2,\ldots,y_n]$ that sends $y_1$ to $-y_1$ and preserves other variables. So (1) is proved. For (2), we denote $P(x)=(x+1)^n+(x-1)^n-2x^n$. We see $P(x)=n(n-1)x^{n-2}+O(x^{n-3})$. Set
$$b_q=\sum_{1 \leq a,b \leq q,a+b\geq q}P(q-a)P(q-b)(a+b-q)$$
Assume $1\leq a,b \leq q,a+b\geq q$. Since $e_{M,a}=P(q-a)$ when $a<q$, they are not equal only when $a=q$. In this case $P(0)=1+(-1)^n,e_{M,q}=1$. Similarly $e_{N,b}=P(q-b)$ when $b<q$ and $P(0)=1+(-1)^n,e_{N,q}=1$. Thus when $q \to \infty$,
\begin{align*}
b_q-a_q=\sum_{1 \leq a,b \leq q,a=q \textup{ or }b=q}(P(q-a)P(q-b)-e_{M,a}e_{N,b})(a+b-q)\\
=\sum_{1 \leq b \leq q-1}(-1)^nbP(q-b)
+\sum_{1 \leq a \leq q-1}(-1)^naP(q-a)+q((1+(-1)^n)^2-1)
\end{align*}
Since $\deg(P)=n-2$, for any $0 \leq t \leq 1$, $\frac{\lfloor tq \rfloor\cdot P(q-\lfloor tq \rfloor)}{q^{n-1}}$ is uniformly bounded independent of $t$, and $\lim_{q \to \infty}\frac{\lfloor tq \rfloor\cdot P(q-\lfloor tq \rfloor)}{q^{n-1}}=n(n-1)t(1-t)^{n-2}$ exists. Thus
$$\frac{1}{q^n}\sum_{1 \leq b \leq q-1}bP(q-b)=\int^1_0 \frac{\lfloor tq \rfloor\cdot P(q-\lfloor tq \rfloor)}{q^{n-1}}dt \to \int^1_0 n(n-1)t(1-t)^{n-2}dt<\infty$$
So $b_q-a_q=O(q^n)=o(q^{2n-1})$ since $n \geq 2$. In particular, $\lim_{q \to \infty}a_q/q^{2n-1}$ exists if and only if $\lim_{q \to \infty}b_q/q^{2n-1}$ exists, and when they exist, they are equal. 

Now we compute $\lim_{q \to \infty}b_q/q^{2n-1}$. Note that we have for $0 \leq s,t \leq 1$,

$$\lim_{q \to \infty}\frac{P(q-\lfloor sq\rfloor)P(q-\lfloor tq \rfloor)(\lfloor sq\rfloor+\lfloor tq \rfloor-q)}{q^{2n-3}}=n^2(n-1)^2(1-s)^{n-2}(1-t)^{n-2}(s+t-1)$$
and this convergence is uniform for $s,t \in [0,1]^2$. If $s+t<1$, then for large $q$, $\lfloor sq\rfloor+\lfloor tq\rfloor<1$; if $s+t>1$, then for large $q$, $\lfloor sq\rfloor+\lfloor tq\rfloor>1$; the set $s+t=1, 0 \leq s,t \leq 1$ is a segment which has zero measure. Thus

\begin{align*}
\lim_{q \to \infty}b_q/q^{2n-1}=\lim_{q \to \infty}\sum_{1 \leq a,b \leq q,a+b\geq q}\frac{P(q-a)P(q-b)(a+b-q)}{q^{2n-1}}\\
=\lim_{q \to \infty}\int_{\Delta}\frac{P(q-\lfloor sq\rfloor)P(q-\lfloor tq \rfloor)(\lfloor sq\rfloor+\lfloor tq \rfloor-q)}{q^{2n-3}}dsdt\\
=\int_{\Delta}n^2(n-1)^2(1-s)^{n-2}(1-t)^{n-2}(s+t-1)dsdt
\end{align*}
where $\Delta=\{(s,t):0 \leq s,t \leq 1,s+t\geq 1\}$.

The rest is computation in calculus. We have
\begin{align*}
\int_{\Delta}(1-s)^{n-2}(1-t)^{n-2}(s+t-1)dsdt\\
=\int_{\Delta}-(1-s)^{n-1}(1-t)^{n-2}dsdt+\int_{\Delta}(1-s)^{n-2}t(1-t)^{n-2}dsdt\\
=\int_0^1\frac{1}{n}(1-s)^n\vert_{1-t}^{1}(1-t)^{n-2}dt+\int_0^1\frac{1}{n-1}(1-s)^{n-1}\vert_{1-t}^{1}\cdot-t(1-t)^{n-2}dt\\
=\int_0^1\frac{-t^n}{n}(1-t)^{n-2}dt+\int_0^1\frac{-t^{n-1}}{n-1}\cdot-t(1-t)^{n-2}dt\\
=\int_0^1\frac{1}{n(n-1)}t^n(1-t)^{n-2}dt
\end{align*}
So 
\begin{align*}
\lim_{q \to \infty}a_q/q^{2n-1}=\lim_{q \to \infty}b_q/q^{2n-1}=\int_0^1n(n-1)t^n(1-t)^{n-2}dt=\frac{(n!)^2}{(2n-1)!}.    
\end{align*}
\end{proof}

 \bibliographystyle{alpha}
    \bibliography{ref}
\end{document}